\documentclass[article]{amsart}
\usepackage{cases}
\usepackage{amsmath, amsfonts, pifont, amssymb}
\usepackage{verbatim}
\usepackage{geometry}
 \geometry{
 a4paper,
 left=40mm,right=40mm,
 top=35mm,
 }
\newtheorem{theorem}{Theorem}[section]
\newtheorem{lemma}[theorem]{Lemma}
\newtheorem{proposition}[theorem]{Proposition}

\newcommand{\R}{\mathbb{R}}

\newcommand{\f}{\frac}
\newcommand{\beq}{\begin{equation}}
\newcommand{\eeq}{\end{equation}}
\newcommand{\beqq}{\begin{equation*}}
\newcommand{\eeqq}{\end{equation*}}

\theoremstyle{definition}
\newtheorem{definition}[theorem]{Definition}

\theoremstyle{remark}

\newtheorem{conjecture}[theorem]{Conjecture}
\numberwithin{equation}{section}



\numberwithin{equation}{section}

\begin{document}

\title{On scattering for the defocusing high dimensional inter-critical NLS  }
\author{Chuanwei Gao and Zehua Zhao}
\maketitle

\begin{abstract}
In this paper, we study the critical norm conjecture for the inter-critical nonlinear Schr{\"o}dinger equation with critical index $s_c$ satisfying $\frac{1}{2}<s_c<1$ when $d\geq 5$. Under the assumption of uniform boundedness of the critical norm, we prove the global well-posedness and scattering for the Cauchy problem. We follow the standard `Concentration compactness/Rigidity method' established in \cite{KenigMerle1,KenigMerle2}, and treat three scenarios for the critical element respectively. Moreover, double Duhamel method and interaction Morawetz estimate are applied to exclude the critical element.
\end{abstract}
\bigskip

\section{introduction}

We discuss the following nonlinear Schr\"odinger initial value problem on $\mathbb{R}_t\times \mathbb{R}_x^d$:
\begin{align}\label{01}
  \begin{cases}
  (i\partial_t+\Delta_{\mathbb{R}_x^{d}})u=\lambda |u|^pu,\\
  u(0,x)=u_0(x) \in \dot{H}^{s_c}(\mathbb{R}_x^{d})
  \end{cases}
  \end{align}
 where $p=\frac{4}{d-2s_c}$, $\lambda=1$ and $d \geq 5$. In general, we call the equation \eqref{01} defocusing when $\lambda=1$, and focusing when $\lambda=-1$. In this paper, we are dedicated to dealing with the defocusing case.\vspace{2mm}

The solutions of equation \eqref{01} are invariant under the scaling transformation:
\begin{align}\label{scale}
  u(t,x)\mapsto \lambda^{\frac{2}{p}}u(\lambda^2t,\lambda x)
  \end{align}
for $\lambda>0$. This scaling invariance naturally defines a notion of criticality. To be more specific, a direct computation shows that the homogeneous $L_x^2$-based Sobolev space that is invariant under scaling \eqref{scale} is $\dot{H}^{s_c}_x(\mathbb{R}^d)$ where the critical regularity $s_c$ is given by $s_c:=\f{d}{2}-\f{2}{p}$. We call the problem \emph{mass-critical} if $s_c=0$, \emph{energy-critical} if $s_c=1$, \emph{inter-critical} if $(0<s_c<1)$ and \emph{energy-supercritical} if $s_c>1$ respectively. In this paper, we focus on the \emph{inter-critical} case, especially the case when $\frac{1}{2} <s_c<1$. Moreover, equation \eqref{01} has the following conserved quantities:
\begin{align*}
\text{Mass: }    &\quad   \mathcal{M}(u(t))  = \int_{\mathbb{R}^d} |u(t,x)|^2dx,\\
\text{   Energy:  }     &  \quad \mathcal{E}(u(t))  = \int_{\mathbb{R}^d} \frac12 |\nabla_{\mathbb{R}^d} u(t,x,y)|^2  + \frac{1}{p+2} |u(t,x)|^{p+2}dx ,\\
\text{ Momentum: } &  \quad   \mathcal{P}(u(t)) = \Im \int_{\mathbb{R}^d} \overline{u}(x,t) \nabla_{\mathbb{R}^d} u(x,t)dx.
\end{align*}
Thus, specifically, when $s_c=0$, the $L^2$-norm of the solution stays bounded according to the conservation of mass; when $s_c=1$, the $\dot{H}^1$-norm of the solution also stays bounded according to the conservation of energy and the Sobolev inequality.\vspace{2mm}

We proceed by making the notion of solution precise.

\begin{definition}[Strong solution]
  A function $ u : I\times \R^d \rightarrow \mathbb{C} $ on a non-empty time interval $0\in I $ is a strong solution to \eqref{01} if it belongs to $C_t\dot{H}_x^{s_c}(K\times \R^d )\cap L_{t,x}^{\f{d+2}{2}p}(K\times \R^d)$ for any compact interval $K\subset I$ and obeys the Duhamel formula
  \begin{align}\label{eq:duhamel}
    u(t)=e^{it\Delta}u_0-i\int_0^te^{i(t-s)\Delta}(|u|^pu)(s)ds,
  \end{align}
  for each $t\in I.$ We call $I$ the lifespan of $u$ and we say that $u $ is a maximal-lifespan solution if it cannot be extended to any strictly larger interval and we say $u$  is a global solution  if $I=\R.$
  \end{definition}

Standard techniques show that the $L_{t,x}^{\f{d+2}{2}p}$ spacetime integrity of solution $u(t,x)$ implies scattering, that is, there exists $u_{\pm}\in \dot{H}_x^{s_c}(\mathbb{R}^d)$ such that
  \begin{align*}
   \lim_{t\rightarrow \pm \infty}\|u(t)-e^{it\Delta}u_{\pm}\|_{\dot{H}^{s_c}_x(I\times\mathbb{R}^d)}=0.
  \end{align*}

Scattering is an important asymptotic behavior of solutions and it means the solution resembles the linear solution in the long time. The above fact promotes us to define the notion of scattering size and blow up  as follows:

\begin{definition}[Scattering size and blow up]
We define the scattering size of a solution $u:I\times \mathbb{R}^d\rightarrow \mathbb{C}$ to \eqref{01} by
\begin{align*}  S_I(u):=\iint_{I\times \mathbb{R}^d}|u(t,x)|^{\f{d+2}{2}p}dxdt.
\end{align*}
We say $u$ \emph{blows up} forward in time if there exists $t_0 \in I $ such that $S_{[t_0,\sup I)}(u)=\infty$; correspondingly, we say $u$ \emph{blows up} backward in time if there exists $t_0 \in I $ such that $S_{(\inf I,t_0]}(u)=\infty$.
\end{definition}

\begin{theorem}[Local well-posedness]\label{wellposed}
Let $d\geq 5$ and $1/2 <s_c<1$, for any $ u_0 \in\dot{H}^{s_c}(\mathbb{R}^d)$~and ~$t_0 \in\mathbb{R},$~there exists a unique maximal-lifespan solution ~$u:I\times \mathbb{R}^d\rightarrow\mathbb{C}$ to \eqref{01} with ~$u(t_0)=u_0$. Furthermore,
\begin{enumerate}
\item (Local existence) $I $ is  an open neighborhood of $t_0.$
\item(Blow up ) If $\sup I$ is finite, then $u$ blows up forward in time. If $\inf I$ is finite, then ~$u$~ blows up backward in time.
    \item (Scattering and wave operators) If sup ~$I=\infty $~and ~$u$~ does not blow up forward in time, then ~$u$~ scatters forward in time. That is, there exists ~$u_{+}\in \dot{H}^{s_c}(\mathbb{R}^d)$~such that
        \begin{equation}\label{scater}
          \lim_{t\rightarrow \infty}\|u(t)-e^{it\Delta}u_{+}\|_{\dot{H}^{s_c}(\mathbb{R}^d)}=0
        \end{equation}
        Conversely, for any ~$u_+\in \dot{H}^{s_c}(\mathbb{R}^d)$~there exists a unique solution to \eqref{01} defined in a neighborhood of~$ t=\infty $~ such that \eqref{scater} holds. The analogous statements hold backward in time as well.
        \item(Small data scattering) If ~$\|u_0\|_{\dot{H}^{s_c}(\mathbb{R}^d)}$~is sufficiently small, then $u$ is global and scatters with ~$S_{\mathbb{R}}(u)\lesssim\|u\|_{\dot{H}^{s_c}(\mathbb{R}^d)}^{\f{d+2}{2}p}$~
            \end{enumerate}
        \end{theorem}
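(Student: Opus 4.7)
The plan is to run a contraction mapping argument for the Duhamel operator
$$\Phi(u)(t) = e^{i(t-t_0)\Delta}u_0 - i\int_{t_0}^t e^{i(t-s)\Delta}(|u|^pu)(s)\,ds$$
in a Strichartz-type function space adapted to the critical regularity $\dot H^{s_c}$. First I would verify that the scattering norm $L_{t,x}^{\f{d+2}{2}p}$ is precisely the $\dot H^{s_c}$-Strichartz norm corresponding to the admissible pair $q = r = \f{2(d+2)}{d-2s_c}$, so that
$$\|e^{it\Delta}u_0\|_{L_{t,x}^{\f{d+2}{2}p}(I\times \R^d)} \less \|u_0\|_{\dot H^{s_c}(\R^d)},$$
and set up the resolution space $X(I)$ as the intersection of $L_t^\infty \dot H_x^{s_c}(I)$, this critical space, and an auxiliary admissible Strichartz norm carrying $|\nabla|^{s_c}u$.

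The central technical input is a nonlinear estimate of the form
$$\lf\| |\nabla|^{s_c}(|u|^p u) \ri\|_{N(I)} \less \|u\|_{X(I)}^{p+1},$$
where $N(I)$ is a suitable dual admissible Strichartz norm. I would prove it by H\"older's inequality, Sobolev embedding, and the fractional chain and Leibniz rules of Christ--Weinstein type: one places $|\nabla|^{s_c}$ on one factor to be controlled in the critical Strichartz space, and handles the remaining $|u|^p$ factor in an appropriate $L_t^q L_x^r$. Because $p = \f{4}{d-2s_c}$ is non-integer and can be less than $1$ in high dimensions, applying the chain rule to $|z|^p z$ requires care with its H\"older regularity; this is the \emph{main obstacle}, and is the reason the hypotheses $d\geq 5$ and $\f{1}{2}<s_c<1$ appear at this step.

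Combining the two estimates, $\Phi$ maps a small closed ball of $X(I)$ to itself and is a strict contraction there under either of two smallness conditions: (a) $\|u_0\|_{\dot H^{s_c}}$ is itself small, in which case the global Strichartz estimate makes $\|e^{it\Delta}u_0\|_{L_{t,x}^{\f{d+2}{2}p}(\R\times\R^d)}$ small and one obtains item (4) on $I = \R$; or (b) $u_0$ is arbitrary but $I$ is a sufficiently short interval around $t_0$, exploiting that $\|e^{it\Delta}u_0\|_{L_{t,x}^{\f{d+2}{2}p}(I\times \R^d)} \to 0$ as $|I| \to 0$ via a density-of-Schwartz plus monotone convergence argument. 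The resulting fixed point gives (1) together with uniqueness in the Strichartz class. The blow-up alternative (2) then follows by the usual continuation argument: as long as the scattering norm remains finite on the current lifespan, the fixed-point step extends $u$ to a slightly larger interval, so a maximal-in-time solution that cannot be extended must have infinite scattering norm on that side.

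For (3), if $\sup I = \infty$ and $u$ does not blow up forward, then $\|u\|_{L_{t,x}^{\f{d+2}{2}p}([t_0,\infty)\times\R^d)} < \infty$, and the nonlinear estimate together with Strichartz show that $\{e^{-it\Delta}u(t)\}$ is Cauchy in $\dot H^{s_c}$ as $t\to\infty$; its limit $u_+ = u_0 - i\int_{t_0}^\infty e^{-is\Delta}(|u|^pu)(s)\,ds$ is the desired scattering state. Conversely, given $u_+ \in \dot H^{s_c}$, I would solve $u(t) = e^{it\Delta}u_+ - i\int_t^\infty e^{i(t-s)\Delta}(|u|^pu)(s)\,ds$ on $[T,\infty)$ for $T$ large by the same contraction scheme, using now that $\|e^{it\Delta}u_+\|_{L_{t,x}^{\f{d+2}{2}p}([T,\infty)\times\R^d)} \to 0$ as $T\to\infty$. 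The backward-in-time statements follow by time reversal.
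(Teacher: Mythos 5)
Your proposal is correct and follows the standard contraction-mapping/Strichartz route, which is exactly the argument in the references the paper points to: the paper gives no proof of this theorem itself, only the remark referring to Cazenave--Weissler \cite{CW} and Section 3 of \cite{M1}. The one imprecision is that $q=r=\frac{2(d+2)}{d-2s_c}$ is not an admissible pair in the sense of the paper's Definition 2.7 (it is only $\dot H^{s_c}$-admissible), so the bound $\|e^{it\Delta}u_0\|_{L_{t,x}^{\frac{2(d+2)}{d-2s_c}}(\R\times\R^d)}\lesssim \|u_0\|_{\dot H^{s_c}(\R^d)}$ should be obtained from the Strichartz estimate for a genuinely admissible pair with $|\nabla|^{s_c}$ followed by Sobolev embedding; this does not affect your scheme.
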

\emph{Remark.} We refer to \cite{CW} for the proof of Theorem \ref{wellposed}  (see also Section 3 in \cite{M1}).\vspace{3mm}

Before addressing our main result, we introduce the background of this problem and some important results. Generally speaking, the Cauchy problem (1.1) indicates a category of NLS problems and there is a natural corresponding conjecture based on existing results as follows:

\begin{conjecture}[Critical norm conjecture]\label{cnc}
\noindent For the following defocusing Schr{\"o}dinger initial value problem,
\begin{equation}\label{eqcnc}
\aligned
(i\partial_t+ \Delta_{\mathbb{R}^{d}}) u &= F(u) = |u|^{p} u, \\
u(0,x) &= u_{0} \in \dot{H}^{s_c}(\mathbb{R}^{d})
\endaligned
\end{equation}
where $s_c \geq 0$ and $p=\frac{4}{d-2s_c}$. If the critical norm of the solution $u(t,x)$ satisfies
\begin{equation}\label{bdd}
\sup\limits_{t\in I} ||u(t)||_{\dot{H}^{s_c}(\mathbb{R}^d)} <\infty
\end{equation}
\noindent where $I$ is the lifespan of the solution $u(t)$. Then the solution to \eqref{eqcnc} is globally well-posed and scattering.
\end{conjecture}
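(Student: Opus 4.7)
The plan is to follow the concentration-compactness/rigidity roadmap of Kenig--Merle, specialised to the inter-critical range $\f12<s_c<1$, $d\ge 5$, under the a priori bound \eqref{bdd}. As a first step I would set
\beqq
L(E):=\sup\bigl\{\,S_I(u)\,:\,\|u\|_{L_t^\infty(I;\dot H_x^{s_c})}\le E\,\bigr\},
\eeqq
the supremum being taken over all maximal-lifespan solutions $u$ of \eqref{01} with lifespan $I$. Assuming toward contradiction that the conjecture fails, there exists a critical threshold $E_c<\infty$ with $L(E)<\infty$ for $E<E_c$ and $L(E_c)=\infty$. Picking a sequence of solutions almost saturating this threshold, I would apply a linear $\dot H^{s_c}$ profile decomposition of Keraani type together with the stability/perturbation theory built on the local theory of Theorem \ref{wellposed}, in the spirit of \cite{KenigMerle1,KenigMerle2}, to extract a minimal (``critical'') blow-up solution $u_c$ whose orbit is precompact in $\dot H^{s_c}(\R^d)$ modulo the symmetries of scaling and spatial translation; concretely there exist $N(t)>0$ and $x(t)\in\R^d$ such that the rescaled family $\bigl\{N(t)^{-2/p}\,u_c\bigl(t,\,N(t)^{-1}(\cdot-x(t))\bigr)\bigr\}_{t\in I}$ is relatively compact in $\dot H^{s_c}$.

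Next I would run the standard trichotomy on $u_c$ according to the behaviour of $N(t)$, obtaining three scenarios to kill off: (i) a finite-time blow-up solution, (ii) a global soliton-like solution with $N(t)\equiv 1$ and $x(t)$ suitably controlled, and (iii) a global low-to-high frequency cascade with $N(t_n)\to 0$ along some sequence of times. For the soliton-like scenario I would invoke an interaction Morawetz inequality of Planchon--Vega type adapted to the $\dot H^{s_c}$ setting; this yields a uniform-in-$T$ spacetime upper bound on $u_c$ which, combined with the positive lower bound forced by precompactness of the orbit, contradicts letting $T\to\infty$.

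For the two remaining scenarios the workhorse is the double Duhamel trick. Pairing the forward and backward Duhamel representations \eqref{eq:duhamel} of $u_c$ and exploiting the weak dispersive decay of $e^{i(t-s)\Delta}$ on the precompact family $\{u_c(t)\}$, I would upgrade the a priori $\dot H^{s_c}$ regularity to $\dot H^{s}$ for some $s<s_c$, and in fact all the way down to $L^2$ in the cascade case. In the frequency cascade scenario this extra regularity, combined with compactness along $N(t_n)\to 0$, forces $\|u_c(t_n)\|_{\dot H^{s_c}}\to 0$, contradicting minimality. In the finite-time blow-up scenario the same regularity upgrade makes a sub-critical conservation law (mass in particular) available, and that conserved quantity is then shown simultaneously to be strictly positive and to vanish at the blow-up time after rescaling, which is the desired contradiction.

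The main obstacle I expect is the exclusion step, and in particular the interplay between the Morawetz and double Duhamel arguments in the window $\f12<s_c<1$. Since $s_c$ is strictly less than $1$, there is no direct $\dot H^1$ energy conservation to lean on, so the Sobolev exponents appearing in the interaction Morawetz computation must be balanced against precompactness in $\dot H^{s_c}$ rather than in $\dot H^1$; on the other hand, the hypothesis $s_c>\f12$ is precisely what supplies enough weighted dispersive decay for the bilinear double-Duhamel integrals to converge absolutely. Getting these two tools to cooperate, and verifying that the stability theory in $\dot H^{s_c}$ is robust enough (for $d\ge 5$) to support the profile decomposition and the reduction to the three enemies above, are the places where I would expect the technical work to concentrate.
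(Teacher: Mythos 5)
You are asked to prove the statement labelled Conjecture \ref{cnc}, i.e.\ the Critical Norm Conjecture for \emph{all} $s_c\geq 0$ with $p=\frac{4}{d-2s_c}$; the paper records this precisely as a conjecture and proves only the special case $\frac12<s_c<1$, $d\geq 5$ (Theorem \ref{mainthm}). Your very first sentence restricts to that same window, so what you outline is at best a proof plan for Theorem \ref{mainthm}, not for the statement under review. The restriction is not cosmetic: both engines you invoke are range-specific. The double Duhamel trick relies on the strong dispersive decay available for $d\geq 5$ so that the bilinear integral $\int_0^\infty\int_{-\infty}^0\min(|t-\tau|^{-1},2^{2j})^{\frac{d}{q}-\frac{d}{2}}\,dt\,d\tau$ converges (Lemma \ref{ind}), and the overall scheme offers no substitute for a conservation law in the regimes $0<s_c\leq\frac12$, $s_c>1$, or low dimensions, which remain open. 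As a proof of the conjecture as stated, this is an unbridgeable gap of scope, not a detail.

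Even judged against the paper's proof of its special case, the way you allocate the tools has a genuine hole, plus two reversed details. The interaction Morawetz estimate \eqref{inter} is bounded by $\|u\|_{L_t^\infty L_x^2}^{1/2}\|u\|_{L_t^\infty\dot H_x^{1/2}}^{1/2}$, and since $s_c>\frac12$ both norms are \emph{subcritical}, hence not controlled by the hypothesis \eqref{bdd} alone; the paper must first establish negative regularity (Lemma \ref{base} together with the double-Duhamel Lemma \ref{ind}, yielding Theorem \ref{theo2}) and only then run Morawetz to exclude the soliton (Theorem \ref{theo1}). Your plan applies Morawetz to the soliton without this preliminary step and reserves the double Duhamel argument for the other two scenarios, so as written the soliton exclusion does not close. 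Moreover, in the paper's reduction the cascade scenario has $N(t)\geq 1$ with $\limsup N(t)=\infty$ (not $N(t_n)\to 0$), and it is excluded by interpolating the negative regularity bound against \eqref{eq:7} and invoking conservation of mass (Theorem \ref{thmforlth}), not by a minimality argument; and the finite-time blow-up scenario is handled by the no-waste Duhamel formula (Proposition \ref{le1}) plus Strichartz and \eqref{eq:8}, giving $\|u(t)\|_{L^2_x}\lesssim (T_{\max}-t)^{s_c/2}$, with no double Duhamel needed there. With these corrections your sketch would align with the paper's argument for Theorem \ref{mainthm} --- but it would still establish only that theorem, not Conjecture \ref{cnc}.
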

\emph{Remark.} Conjecture \ref{cnc} is referred to as the `\emph{Critical Norm Conjecture}'. Briefly speaking, the meaning of Conjecture \ref{cnc} is: uniform boundedness of the critical norm implies scattering. In other words, there are no `\emph{Type-II blow-up solutions}' (blow-up solutions whose critical norm stays bounded but scattering norm blows up). Moreover, for the two special cases (energy-critical when $s_c=1$ and mass-critical when $s_c=0$), we can remove the assumption of boundedness of critical norm \eqref{bdd} according to the conservation laws.\vspace{3mm}

\emph{Remark.} According to Sobolev inequality, the uniform boundedness of the critical norm \eqref{bdd} implies:
\begin{equation}\label{Linfty}
\|u(t,x)\|_{L_t^\infty L_x^{\f{dp}{2}}(I\times \R^d )} <\infty.
\end{equation}

Conjecture \ref{cnc} is a famous and important problem in the area of dispersive evolution equations. It attracts intensive interests and has been studied by many people for recent decades. Now we will briefly introduce some important results.\vspace{2mm}

For the energy-critical case, the lifespan of a solution depends on the profile of the initial data as well as its norm. Thus local existence along with conserved energy quantity can not extend the local theory to the global one. Under the radial assumption, the breakthrough was made by Bourgain's monumental work \cite{Bourgain}. Since a typical Lin-Strauss Morawetz estimate to \eqref{01} with $d=3$ holds a form as follows
\begin{align}\label{linstr}
  \int_{I}\int_{\R^3}\f{|u|^{6}}{|x|} dxdt \lesssim (\sup_{t\in I}\|u(t)\|_{\dot{H}^{\f{1}{2}}})^2.
\end{align}
The left-hand side of \eqref{linstr} appears as $\dot{H}^{\f{1}{2}}$ rather than $\dot{H}^1,$  which makes \eqref{linstr} supercritical with respect to the energy. In order to overcome this difficulty, Bourgain \cite{Bourgain} introduced a space-localized variant of \eqref{linstr} as follows
\begin{align}\label{linstr1}
  \int_{I}\int_{|x|\lesssim 1}\f{|u|^{6}}{|x|} dxdt \lesssim \operatorname{E}(u)|I|^{\f{1}{2}}，
\end{align}
together with an `induction on energy' strategy, \cite{Bourgain} settled the conjecture for the case $s_c=1, d=3,4$ under the radial assumption. Subsequently Grillakis \cite{Grillakis}, using a different method, obtained a part of the results in \cite{Bourgain}, namely global existence for smooth, radial, finite energy initial data. Later, T. Tao \cite{TaoNew} extended the results to higher dimension cases, for the nonradial setting, adapting the `induction on energy' approach of Bourgain \cite{Bourgain} with the Lin-Strauss Morawetz estimate replaced by an interaction variant introduced in \cite{Iteam2}:
  \begin{gather}\label{interactive}
  \begin{split}
    -\int_I\int\int_{\R^d\times \R^d}&|u(t,x)|^2\Delta\left (\f{1}{|.|}(x-y)\right)|u(t,y)|^2dxdydt\\
    &\lesssim\|u\|_{L_t^\infty L_x^2}^2\||\nabla|^{\f{1}{2}}u\|_{L_t^\infty L_x^2}^2.
    \end{split}
  \end{gather}

 Subsequently \cite{RM,V1} extended  the results to arbitrarily  higher  dimension.  Using the compactness-concentration argument, Kenig and Merle \cite{KenigMerle2}   obtained the  minimal blowup solution. Due to its critical property, such a solution must concentrate at frequency and physical space at the same time. To be more precise, the critical elements are compact in the associated critical Sobolev space after moduling symmetries. Since then, the method has been standardized and adapted to various settings.\vspace{2mm}

As for the mass-critical setting, motivated by the success of  settling  the energy-critical problem, Tao-Visan-Zhang \cite{TVZ} handled the mass-critical case under radial assumption for  $d\geq 3$. Then Killip-Tao-Visan \cite{KTM} extended the results to $d=2$ for both defocusing and focusing cases. For the nonradial case, we refer to B. Dodson's series works \cite{Dodson1,Dodson2, Dodson3,Dodson4}. Remarkably, B. Dodson developed a strong technique, the `Long-time Stricharz estimate' which has proved to be a useful tool.\vspace{2mm}

The first to handle the problem of the inter-critical regularity was Kenig-Merle \cite{kenigmerler2} for the case $d=3,s_c=\f{1}{2}$ by making use of their pioneered concentration-compactness argument together with Lin-Strauss Morawetz estimate. It is worth noting  that since the Lin-Strauss Morawetz inequality has a scale of $\dot{H}^{\f{1}{2}}$ which is exactly suited for this setting. Then, J. Murphy in \cite{M2} extended the analogous result to $d\geq 4$. See also \cite{GMY,M1,XF,Y} for other inter-critical results. As for energy-supercritical case, we refer to \cite{DMMZ,LZ,kv2010,MMZ,M3}. \vspace{3mm}

Now we may address our main result in this paper as follows:
\begin{theorem}[Main Theorem]\label{mainthm}
\noindent Let $u:I\times \R^d$ be the maximal lifespan solution to \eqref{01} with critical index $s_c$ satisfying $\frac{1}{2} <s_c <1$ when $d\geq 5$. Additionally, we assume that the solution $u(t)$ satisfies
\begin{equation}\label{eq:8}
\sup\limits_{t\in I} ||u(t)||_{\dot{H}^{s_c}(\mathbb{R}^d)} <\infty
\end{equation}
\noindent where $I$ is the lifespan of the solution $u(t)$. Then the solution $u$ is global and scatters in the following sense: there exists ~$u_{+}\in \dot{H}^{s_c}(\mathbb{R}^d)$~such that
        \begin{equation}\label{scattering}
          \lim_{t\rightarrow \infty}\|u(t)-e^{it\Delta}u_{+}\|_{\dot{H}^{s_c}(\mathbb{R}^d)}=0.
        \end{equation}
\end{theorem}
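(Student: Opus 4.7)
The plan is to implement the \emph{concentration compactness / rigidity} roadmap of Kenig--Merle, adapted to the inter-critical $\dot H^{s_c}$ setting as in Murphy \cite{M2}. Assume the conclusion fails. Combining local well-posedness (Theorem \ref{wellposed}) with the hypothesis \eqref{eq:8}, one can then define a critical threshold
\[
L_{\max}=\sup\{\,L\ge 0 : \text{every solution with } \sup_{t\in I}\|u(t)\|_{\dot H^{s_c}}\le L \text{ scatters with } S_{\mathbb R}(u)\lesssim_L 1\,\},
\]
which by small-data scattering is positive and by assumption finite. First I would establish a linear profile decomposition in $\dot H^{s_c}_x$ adapted to the scaling and translation symmetries of \eqref{01}, then build the corresponding nonlinear profiles and invoke the stability theory for the inter-critical equation to extract a minimal blow-up (\emph{critical}) solution $u_c$ with $\sup_t\|u_c(t)\|_{\dot H^{s_c}}=L_{\max}$ whose orbit $\{u_c(t)/N(t)^{2/p}(\cdot-x(t))/N(t)\}_{t\in I}$ is precompact in $\dot H^{s_c}_x$ for some frequency-scale function $N(t)$ and spatial center $x(t)$.

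Next I would perform the standard reduction of the critical element to one of three scenarios: (I) a \emph{finite-time blow-up} solution, (II) a \emph{soliton-like} solution on $\mathbb R$ with $N(t)\equiv 1$ and $x(t)$ controlled, and (III) a \emph{low-to-high frequency cascade} on $\mathbb R$ with $N(t)\to\infty$ along a sequence of times. This step is routine once the almost-periodicity modulo symmetries is in hand: one uses the frequency localization / compactness properties of the orbit together with the local constancy of $N(t)$ to either rescale to a stationary regime or to diverging scales.

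The core of the paper, and what I expect to be the main technical obstacle, is ruling out all three scenarios. I would rule out scenario (I) by propagating mass: compactness modulo symmetries combined with $N(t)\to\infty$ as $t$ approaches the blow-up time and the interplay between the Strichartz estimates at the inter-critical scaling forces the solution to have zero $\dot H^{s_c}$ norm, a contradiction. For scenario (II) the natural tool is the \emph{interaction Morawetz estimate} \eqref{interactive}; because $1/2<s_c<1$ the estimate can be interpolated (via Sobolev embedding and the uniform $\dot H^{s_c}$ bound) with the almost-periodicity of $u_c$ to show that
\[
\int_{\mathbb R}\int_{\mathbb R^d}|u_c(t,x)|^{\frac{2(d+1)}{d-1}}\,dx\,dt <\infty,
\]
which combined with the lower bound coming from the nonvanishing of $\|u_c(t)\|_{\dot H^{s_c}}$ on a compact orbit produces a contradiction. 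Scenario (III) is the subtlest; here I would apply the \emph{double Duhamel} trick, pairing the forward and backward Duhamel representations and integrating against each other, to gain additional regularity and deduce that $u_c(t)\in \dot H^{s}_x$ for some $s<s_c$ (in fact, all the way down to $L^2$). Combining this extra regularity with conservation of mass along the flow and the scaling behavior under $N(t)\to\infty$ forces $\|u_c(t)\|_{L^2}=0$, hence $u_c\equiv 0$, contradicting $\|u_c\|_{\dot H^{s_c}}=L_{\max}>0$.

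The hardest part will be implementing the double Duhamel estimate at the inter-critical level: one must carefully manage the dispersive decay of the paired propagators $e^{i(t-s)\Delta}$ and $e^{i(t-s')\Delta}$ in the appropriate Lorentz / Sobolev spaces, use the decay of $N(t)$ to control the frequency-localized pieces, and verify the convergence of the double time integral. The restriction $s_c>1/2$ enters precisely here (as well as in the Morawetz step) to close the bootstrap: it guarantees enough regularity for the nonlinearity $|u|^pu$ to be handled in the dual Strichartz spaces and for the interaction Morawetz to be effectively subcritical. Once all three scenarios are excluded, the critical element cannot exist, contradicting the definition of $L_{\max}$ and completing the proof of Theorem \ref{mainthm}.
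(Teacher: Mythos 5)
Your overall roadmap is the same as the paper's: extract an almost periodic critical element, reduce to the three scenarios (finite-time blow-up, soliton, low-to-high cascade), kill the finite-time case by pushing the mass to zero via the no-waste Duhamel/Strichartz argument, and kill the cascade by a double-Duhamel regularity gain plus mass conservation. Those parts of your plan match the paper's proof in substance.

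There is, however, a genuine gap in your treatment of the soliton-like scenario. You propose to run the interaction Morawetz estimate \eqref{interactive} (equivalently \eqref{inter}) ``interpolated via Sobolev embedding and the uniform $\dot H^{s_c}$ bound.'' But the right-hand side of that estimate is controlled by $\|u\|_{L^\infty_t L^2_x}$ and $\|u\|_{L^\infty_t \dot H^{1/2}_x}$, and for $\tfrac12<s_c<1$ both of these norms lie \emph{below} the critical regularity: they are not controlled by $\sup_t\|u(t)\|_{\dot H^{s_c}}$ through any Sobolev embedding or interpolation, since one cannot interpolate downward from a single homogeneous Sobolev norm. This is exactly why the paper proves the negative regularity theorem (Theorem \ref{theo2}, via the double Duhamel Lemma \ref{dou}) for \emph{all} almost periodic solutions with $N(t)\geq 1$ — not only for the cascade — and then, in the soliton scenario, combines $u\in L^\infty_t\dot H^{-\epsilon}_x\cap L^\infty_t L^2_x$ with \eqref{eq:8} to legitimize \eqref{eq:14} before deriving the contradiction with almost periodicity. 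In your proposal the double Duhamel step is reserved for scenario (III) only, so your scenario (II) argument as written does not close; you would need to first establish the same negative regularity (hence finite mass and finite $\dot H^{1/2}$ norm) for the soliton-like solution. Relatedly, your remark that $s_c>\tfrac12$ makes the interaction Morawetz ``effectively subcritical'' has the logic reversed: for $s_c>\tfrac12$ the Morawetz norms sit at subcritical regularity, which is precisely the obstruction that the double Duhamel trick is introduced to overcome.
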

\emph{Remark. (Main idea of proving Theorem \ref{mainthm}):} We utilize the `Concentration compactness/Rigidity' method established in \cite{KenigMerle1,KenigMerle2} to prove Theorem \ref{mainthm}. The main idea is: if the scattering statement does not hold, we can apply the concentration compactness method to obtain a minimal blowup solution (also called `critical element') which is almost periodic. Moreover, we reduce the `critical element' into three scenarios, namely, `Finite time blow-up scenario', `Low-to-high frequency cascade scenario' and `Soliton-like scenario'. At last, we exclude the `critical element' for the three scenarios respectively.\vspace{3mm}

\textbf{Reduction to almost periodic solution:}\vspace{2mm}

Now using contradiction argument, we consider the situation where  the scattering statement \eqref{scattering} fails. At this point it suffices to draw a contradiction to prove Theorem \ref{mainthm}.\vspace{2mm}

Following standard arguments, similar to Theorem 1.12 in \cite{M1}, we can obtain a `critical element' $u(t,x):I\times \mathbb{R}^d \rightarrow \mathbb{C}$ to \eqref{01} such that $u\in L^{\infty}_t \dot{H}^{s_c}_x(I\times \mathbb{R}^d)$ which is `almost periodic' in the following sense,
\begin{definition}[Almost periodic solution]
\noindent For any $\eta>0$, there exists $C(\eta)$ such that for $t\in I$,
\begin{equation}\label{eq:7}
\int_{|x-x(t)| \geq C(\eta)/N(t)}||\nabla|^{s_c}u(t,x)|^2 dx+\int_{|\xi| \geq C(\eta)N(t)} ||\xi|^{s_c}\hat{u}(t,\xi)|^2 d\xi<\eta.
\end{equation}
\end{definition}
Moreover, the critical element $u(t,x)$ blows up both forward and backward in time. Additionally, $u(t,x)$ has minimal $L^{\infty}_t \dot{H}^{s_c}_x(I\times \mathbb{R}^d)$-norm among all blowup solutions.\vspace{3mm}

Now our task is to exclude the critical element. This takes up most of the paper and is not trivial. Our idea is to discuss the scaling function $N(t)$ and exclude the minimal blowup solutions for different scenarios respectively. The advantage of this method is that we can `split' the difficulties and use the properties of the almost periodic solutions for different scenarios to overcome them respectively. We need the following theorem for the discussion.
\begin{theorem}[Reduction Theorem]\label{reduction}
\noindent
 If there is a nonzero almost periodic solution to \eqref{01}, then there exits a nonzero almost periodic solution $u$ to \eqref{01} on an interval $I$ satisfying the following two alternatives
\begin{equation}
I=\mathbb{R},N(t)\geq 1 \textmd{  for all  } t \in \mathbb{R}
\end{equation}
or
\begin{equation}
\sup\limits(I)<\infty, \lim_{t \rightarrow sup(I)} N(t)=+\infty
\end{equation}
\end{theorem}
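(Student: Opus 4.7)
The plan is to start from the critical element $\tilde u$ (the nonzero almost periodic solution hypothesized) on its maximal lifespan $I$, and produce a new almost periodic solution satisfying one of the two stated alternatives by a combination of rescaling via \eqref{scale}, time-translation, and a subsequential limit extraction that exploits the compactness structure in \eqref{eq:7}.

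Suppose first that $T^{*}:=\sup I<\infty$. I would show $N(t)\to+\infty$ as $t\to T^{*}$, which after a time shift realizes the second alternative. The argument is by contradiction: if there were a sequence $t_n\to T^{*}$ with $N(t_n)\le M$, I form the rescaled solutions
\[
v_n(t,x)=N(t_n)^{-2/p}\,\tilde u\!\left(t_n+\frac{t}{N(t_n)^{2}},\frac{x-x(t_n)}{N(t_n)}\right),
\]
which are almost periodic with $N_{v_n}(0)=1$, and the uniform compactness in \eqref{eq:7} guarantees that $\{v_n(0)\}$ is precompact in $\dot{H}^{s_c}(\mathbb{R}^d)$. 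Extracting a subsequential limit $v_{\infty}(0)$ and invoking the local theory of Theorem \ref{wellposed}, the solution $v_{\infty}$ exists on a fixed interval $(-\delta,\delta)$ with $\delta$ depending only on $\|v_{\infty}(0)\|_{\dot{H}^{s_c}}$; transferring back through the scaling, $\tilde u$ exists on $[t_n-\delta/M^{2},t_n+\delta/M^{2}]$ for large $n$, contradicting the maximality of $I$ as soon as $t_n+\delta/M^{2}>T^{*}$. A symmetric argument via time reversal handles the case $\inf I>-\infty$.

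Now assume $I=\mathbb{R}$ and set $N_{*}:=\inf_{t\in\mathbb{R}}N(t)$. If $N_{*}>0$, applying the scaling \eqref{scale} with parameter $N_{*}$ produces an almost periodic solution with $N(t)\ge 1$ globally, realizing the first alternative. If $N_{*}=0$, I would pick $t_n$ with $N(t_n)\to 0$ and form the rescaled $v_n$ as above with $N_{v_n}(0)=1$. The almost periodicity being inherited uniformly, a subsequence of $\{v_n(0)\}$ converges in $\dot{H}^{s_c}$ to some $v_{\infty}(0)$, generating an almost periodic solution $v_{\infty}$ with $N_{v_{\infty}}(0)=1$. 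Applying the dichotomy again to $v_{\infty}$: either it has finite lifespan (alternative 2), or it is global and --- provided the $t_n$ are selected to approximate the infimum of $N$ on windows where $N$ is quantitatively locally constant --- the infimum of $N_{v_{\infty}}$ is bounded below by $1$, yielding alternative 1 after at most one further rescaling.

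The main obstacle lies in the subcase $I=\mathbb{R}$, $N_{*}=0$: one must verify that the extracted limit $v_{\infty}$ actually satisfies $N_{v_{\infty}}(t)\ge 1$ for every $t$. This hinges on the quantitative local constancy of $N(\cdot)$ for almost periodic solutions (on time scales of order $N(t)^{-2}$), together with the minimality of the critical element from the Palais--Smale reduction that preceded this theorem, which prevents any further concentration in the limit. The conclusion is then to take $u$ to be either the rescaled $\tilde u$ (when $N_{*}>0$) or the extracted limit $v_{\infty}$ (when $N_{*}=0$), in either case satisfying one of the two stated alternatives.
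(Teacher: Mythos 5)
Your treatment of the two easy regimes is essentially the standard one (and matches the route in the references the paper itself cites for this theorem, namely Theorem 3.13 of \cite{Dodsonbook} and Section 4 of \cite{KV3}; the paper gives no independent proof): a finite endpoint forces $N(t)\to\infty$ there by rescaling, compactness and the local theory. Two small caveats in that part: at critical regularity the existence time depends on the profile of the data, not only on $\|v_\infty(0)\|_{\dot H^{s_c}}$, so the uniform lifespan for the $v_n$ must come from precompactness of $\{v_n(0)\}$ together with the stability theory rather than from a norm bound; and the rescaling should be recentred as $\tilde u\big(t_n+tN(t_n)^{-2},\,x(t_n)+x/N(t_n)\big)$, since with $(x-x(t_n))/N(t_n)$ the spatial centres of $v_n(0)$ drift and precompactness fails. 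The case $I=\mathbb{R}$, $\inf_t N(t)>0$ is indeed a one-line application of \eqref{scale}.

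The genuine gap is exactly the sub-case you flag, $I=\mathbb{R}$ with $\inf_t N(t)=0$, and the device you propose does not close it. Quantitative local constancy only gives $N(s)\sim N(t)$ for $|s-t|\lesssim \delta N(t)^{-2}$ with a fixed small $\delta$; over a rescaled time window of fixed length $T$ this still allows $N$ to drop by a factor exponential in $T/\delta$, so choosing $t_n$ ``to approximate the infimum on locally constant windows'' does not keep $N_{v_n}$ bounded below on fixed rescaled windows, does not give $N_{v_\infty}(t)\ge 1$, and nothing prevents the limit from again satisfying $\inf N_{v_\infty}=0$ (with no guarantee of termination if you iterate). Minimality does not rescue this: it ensures the limit is again a nonzero almost periodic blowup solution, but it says nothing about its frequency-scale function. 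What is actually required is a selection of times $t_n$ with, say, $N(t_n)\lesssim \inf\{N(t):|t-t_n|\le nN(t_n)^{-2}\}$, or a substitute for such a choice; producing this is the real content of the cited proofs, which run a case analysis on oscillation-type quantities such as $\mathrm{osc}(T)$ and $a(t_0)=N(t_0)/\sup_{t\le t_0}N(t)+N(t_0)/\sup_{t\ge t_0}N(t)$, combined with convergence lemmas relating the frequency scale of a limit of almost periodic solutions to the limits of the rescaled $N$'s (cf.\ Theorem 5.24 in \cite{kv2}). Your proposal names this obstacle but supplies neither the selection argument nor the limiting lemmas, so the central case of the theorem remains unproved.
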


\emph{Remark.} The proof of Theorem \ref{reduction} follows as in Theorem 3.13 in \cite{Dodsonbook}. See also Section 4 in \cite{KV3}. (We refer to Theorem 5.24 in \cite{kv2} for the mass-critical case) \vspace{3mm}

Furthermore, according to Theorem \ref{reduction}, we make a more delicate reduction for the critical element as follows:
\begin{equation}\label{eq:11}
\textmd{Finite time blow-up scenario : }\sup\limits(I)<\infty, \lim_{t \rightarrow sup(I)} N(t)=+\infty,
\end{equation}
\begin{equation}\label{eq:15}
\textmd{Low-to-high frequency cascade scenario : }I=\mathbb{R}, \limsup_{t\rightarrow +\infty} N(t)=+\infty \textmd{ or } \limsup_{t\rightarrow -\infty} N(t)=+\infty,
\end{equation}
\begin{equation}\label{so}
\textmd{Soliton-like scenario : }I=\mathbb{R},N(t) \sim 1 \textmd{  for all  } t \in \mathbb{R}.
\end{equation}

We recall that our current goal is to exclude the critical element. It suffices to show that there is no such a critical element for the above three scenarios \eqref{eq:11}, \eqref{eq:15} and \eqref{so}. Then we can draw contradictions  (specifically, see Theorem \ref{2ft}, Theorem \ref{thmforlth} and Theorem \ref{theo1}). Thus by contradiction argument, Theorem \ref{mainthm} would be proved. We will discuss the different three scenarios and exclude the critical element respectively (see section 3, section 4, section 5 and section 6).\vspace{2mm}

\emph{The remainder of the paper is organized as follows:}  In section 2, we state some basic preliminaries and useful tools; in section 3, we exclude the Finite time blow-up scenario by using no-waste Duhamel formula and mass conservation law; in section 4, we prove the negative regularity property of the almost periodic solutions when $N(t)\geq 1$ which is a crucial step for solving this problem; in section 5, we use the negative regularity property to exclude the Low-to-high frequency cascade scenario; in section 6, we exclude the Soliton-like scenario by using interaction Morawetz estimate together with  the negative regularity property of the solutions.\vspace{3mm}

\section{Preliminaries and basic tools}
In this section, we discuss notations, preliminaries and some important tools for this problem. Most of the materials can be found in some classical textbooks, notes and papers (we refer to \cite{Iteam1,Dodsonbook,TaoNew,Taylor,kv2}).\vspace{2mm}

We write $X\lesssim Y$  or $Y\gtrsim  X$ whenever $X\leq CY$  for some constant $C>0$ and use $O(Y)$ to denote any quantity $X$ such that  $|X|\lesssim Y.$
 If $X\lesssim Y$  and $Y \lesssim X$  hold simultaneously, we abbreviate that by $X\sim Y.$  Without special clarification, the implicit constant $C$  can vary from line to line.   We denote by $X\pm$  quantity of the form $X\pm\varepsilon$ for any $\varepsilon>0.$\vspace{2mm}

For any spacetime slab $I\times \R^d,$  we use $L_t^q L_x^r(I\times \R^d)$  to denote the Banach space of functions $u:I\times \R^d \rightarrow \mathbb{C}$  whose norm is
\begin{align*}
  \|u\|_{L_t^q L_x^r(I\times \R^d)}:=\left(\int_I\|u(t)\|_{L_x^r}^qdt\right)^{\f{1}{q}}<\infty,
\end{align*}
with the appropriate modification for the case $q$ or $r$  equals to infinity. When $q=r,$  for brevity, sometimes we write it as $L_{t,x}^q.$  One more thing to be noticed is that without obscurity we will use $L_t^q L_x^r$ and $L^q L^r$  interchangeably.\vspace{2mm}

We define the Fourier transform on $\R^d$ by
\begin{align*}
  \hat{f}(\xi):=(2\pi)^{-\f{d}{2}}\int_{\R^d}e^{-ix\xi}f(x)dx,
\end{align*}
and the homogeneous Sobolev norm as
\begin{align*}
  \|f\|_{\dot{H}^s(\R^d)}:=\||\nabla|^s f\|_{L_x^2(\R^d)}
\end{align*}
where
\begin{align*}
  \widehat{|\nabla|^s f}(\xi):=|\xi|^s \hat{f}(\xi).
\end{align*}
Now we are ready to discuss Littlewood-Pelay theory which is an important and useful tool for PDE.\vspace{2mm}

Let $\phi(\xi)$  be a radial bump function supported in the ball $\{\xi\in \R^d :|\xi|\leq \f{11}{10}\}$  and equals $1$  on the ball $\{\xi\in \R^d: |\xi|\leq 1\}.$  For each number $N>0,$  we define
\begin{align*}
    \widehat{P_{\leq N}f}(\xi):=&\varphi\big(\frac{\xi}{N}\big)\hat{f}(\xi), \\
    \widehat{P_{> N}f}(\xi):=&\big(1-\varphi(\frac{\xi}{N})\big)\hat{f}(\xi), \\
    \widehat{P_{N}f}(\xi):=&\big(\varphi(\frac{\xi}{N})-\varphi(\frac{2\xi}{N})\big)\hat{f}(\xi),
\end{align*}
with similar definitions for $P_{< N}$ and $P_{\geq N}$. Moreover,  we define
\begin{align*}
    P_{M<\cdot\leq N}:=P_{\leq N}-P_{\leq M},
\end{align*}
whenever $M<N$. We record two useful lemmas regarding the Littlewood-Paley operators as follows:
\begin{lemma}[Bernstein inequalities]\label{2Bineq}
\noindent For $1\leq r\leq q \leq \infty$, $s \geq 0$, we have
\begin{equation}
|||\nabla|^{\pm s} P_N f||_{L^r(\mathbb{R}^d)} \sim N^{\pm s}||P_N f||_{L^r(\mathbb{R}^d)},
\end{equation}
\begin{equation}
|||\nabla|^{ s} P_{\leq N} f||_{L^r(\mathbb{R}^d)} \lesssim N^s ||P_{\leq N} f||_{L^r(\mathbb{R}^d)},
\end{equation}
\begin{equation}
|| P_{\geq N} f||_{L^r(\mathbb{R}^d)} \lesssim N^{-s} |||\nabla|^{s} P_{\geq N} f||_{L^r(\mathbb{R}^d)},
\end{equation}
\begin{equation}
|| P_{\leq N} f||_{L^q(\mathbb{R}^d)} \lesssim N^{\frac{d}{r}-\frac{d}{q}} || P_{\leq N} f||_{L^r(\mathbb{R}^d)}.
\end{equation}
\end{lemma}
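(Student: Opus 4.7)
The plan is to realize each Littlewood--Paley operator as convolution with a kernel that scales explicitly in $N$, and then invoke Young's convolution inequality. The first three inequalities are all instances of a generic $L^r \to L^r$ bound for a Fourier multiplier whose symbol, after the rescaling $\xi \mapsto N\xi$, becomes a fixed Schwartz function (times the appropriate power of $N$); the fourth is a consequence of computing the explicit $L^s$ norm of the kernel of $P_{\leq N}$ and feeding it into Young's inequality.

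For the first bound I would observe that the multiplier for $|\nabla|^{\pm s} P_N$ is $m_N^{\pm}(\xi) = |\xi|^{\pm s}\bigl(\varphi(\xi/N) - \varphi(2\xi/N)\bigr)$, which is supported in the annulus $|\xi| \sim N$; in particular the singularity of $|\xi|^{-s}$ at the origin is harmless. The dilation $\xi \mapsto N\xi$ gives $m_N^{\pm}(\xi) = N^{\pm s} m_1^{\pm}(\xi/N)$ with $m_1^{\pm}$ Schwartz, so the convolution kernel is $N^{\pm s}$ times an $L^1$-normalized Schwartz function dilated at scale $N^{-1}$. Introducing a fattened cutoff $\ti P_N$ satisfying $P_N = \ti P_N P_N$, I would write
$$|\nabla|^{\pm s} P_N f = \bigl(|\nabla|^{\pm s} \ti P_N\bigr) * P_N f$$
and apply Young's inequality with exponents $(1,r,r)$ to obtain $\||\nabla|^{\pm s} P_N f\|_{L^r} \less N^{\pm s} \|P_N f\|_{L^r}$. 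The reverse inequality follows by applying the same argument to $|\nabla|^{\mp s}$.

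The second and third bounds are obtained in essentially the same way: for $|\nabla|^s P_{\leq N}$ one uses the Schwartz symbol $|\xi|^s \varphi(\xi/N)$, which rescales to $N^s$ times a fixed Schwartz function; for $P_{\geq N}$ one writes $P_{\geq N} = \bigl(|\nabla|^{-s} \ti P_{\geq N}\bigr) |\nabla|^s P_{\geq N}$, where $\ti P_{\geq N}$ is a fattened cutoff adapted to $|\xi| \gtrsim N$, so that $|\xi|^{-s}$ times its symbol is $N^{-s}$ times a smooth, decaying, fixed symbol. Finally, for the embedding estimate, $P_{\leq N} f = K_{\leq N} * f$ with $K_{\leq N}(x) = N^d \check{\varphi}(Nx)$, hence $\|K_{\leq N}\|_{L^s} \sim N^{d/r - d/q}$ when $1 + 1/q = 1/s + 1/r$, and Young's inequality combined with the identity $P_{\leq N} = \ti P_{\leq N} P_{\leq N}$ delivers the stated bound with $P_{\leq N} f$ on the right. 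There is no real obstacle here; the only point requiring mild attention is that, whenever $|\nabla|^{-s}$ appears, the fattened cutoff must be chosen so its support stays strictly away from $\xi = 0$, which is standard.
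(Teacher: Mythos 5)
The paper records this lemma as standard background and gives no proof of its own, so there is nothing to compare against line by line; your convolution-kernel/Young's-inequality argument is exactly the standard route, and its overall structure is correct: the fattened-cutoff identities $P_N=\ti P_N P_N$, $P_{\leq N}=\ti P_{\leq N}P_{\leq N}$, the rescaling of the multiplier to a fixed profile, and the $L^a$ computation for the kernel of $P_{\leq N}$ in the last estimate are all fine, as is deducing the reverse half of the first equivalence by applying the direct bound to $|\nabla|^{\mp s}$.

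One point is stated inaccurately and needs a (routine) repair. For the second inequality the symbol $|\xi|^{s}\varphi(\xi/N)$ is \emph{not} Schwartz when $s$ is not an even integer: $|\xi|^{s}$ fails to be smooth at $\xi=0$, and the support of $\varphi(\cdot/N)$ contains the origin, so you cannot simply say the rescaled symbol is a fixed Schwartz function with $L^1$ kernel. Likewise, for the third inequality the symbol $|\xi|^{-s}\bigl(1-\varphi(c\xi/N)\bigr)$ is smooth but decays only like $|\xi|^{-s}$, so "smooth, decaying" does not by itself give an integrable kernel. Both issues are fixed the same way and this is where the hypothesis $s>0$ actually enters: decompose dyadically and sum the single-annulus bound, e.g.
\begin{equation*}
\||\nabla|^{s}P_{\leq N}f\|_{L^r}\leq\sum_{M\leq 2N}\||\nabla|^{s}P_M P_{\leq N}f\|_{L^r}\lesssim\sum_{M\leq 2N}M^{s}\|P_{\leq N}f\|_{L^r}\lesssim N^{s}\|P_{\leq N}f\|_{L^r},
\end{equation*}
the geometric series converging precisely because $s>0$ (the case $s=0$ being trivial); the bound for $|\nabla|^{-s}\ti P_{\geq N}$ is obtained symmetrically by summing $M^{-s}$ over dyadic $M\gtrsim N$. (Alternatively one can show directly that the kernel of $|\nabla|^{s}\varphi(\nabla)$ decays like $|x|^{-d-s}$, hence lies in $L^1$, but the dyadic summation is the cleanest patch.) Your first equivalence is untouched by this, since there the annular support keeps $|\xi|^{\pm s}$ smooth and compactly supported. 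Also, purely cosmetically, in the last estimate you reuse the letter $s$ for the Young exponent, which collides with the regularity parameter $s$ of the lemma; rename it.
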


We will also need the following chain rule for fractional order derivatives. One can turn to \cite{CW2} for more details.
\begin{lemma}[Fractional chain rule]\label{chain}
Suppose ~$G\in C^1(\mathbb{C})$~and ~$s\in (0,1].$~Let ~$1<r<r_2<\infty $~and ~$1<r_1\leq \infty $~be such that~$\frac{1}{r}=\frac{1}{r_1}+\frac{1}{r_2},$~then
\begin{align}\label{fpr}
  \||\nabla|^sG(u)\|_{L^r_x}\lesssim\|G'(u)\|_{L_x^{r_1}}\||\nabla|^su\|_{L_x^{r_2}}.
\end{align}
\end{lemma}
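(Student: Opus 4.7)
\emph{Proof proposal.} The plan is to split into the endpoint case $s=1$ and the genuinely fractional case $s\in(0,1)$. For $s=1$ the classical chain rule (using Wirtinger derivatives since $G$ maps $\mathbb{C}\to\mathbb{C}$) gives the pointwise bound $|\nabla G(u)(x)|\lesssim |G'(u(x))|\,|\nabla u(x)|$, after which H\"older's inequality with exponents $r_1,r_2$ immediately yields the claim.

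For $s\in(0,1)$ I would invoke Stein's square-function characterization of the homogeneous Sobolev norm: for $1<r<\infty$,
\begin{equation*}
\||\nabla|^s f\|_{L^r_x(\R^d)} \sim \left\|\left(\int_{\R^d} \f{|f(x+y)-f(x)|^2}{|y|^{d+2s}}\,dy\right)^{1/2}\right\|_{L^r_x(\R^d)} =: \|\mathcal{D}^s f\|_{L^r_x}.
\end{equation*}
Applied with $f=G(u)$, the fundamental theorem of calculus along the segment joining $u(x)$ and $u(x+y)$ in $\mathbb{C}$ yields the pointwise difference estimate
\begin{equation*}
|G(u(x+y))-G(u(x))|\leq |u(x+y)-u(x)|\int_0^1 |G'((1-\ta)u(x)+\ta u(x+y))|\,d\ta.
\end{equation*}
The key technical step is to upgrade this to the pointwise inequality $\mathcal{D}^s G(u)(x)\lesssim \mathcal{M}(G'(u))(x)\cdot\mathcal{D}^s u(x)$, where $\mathcal{M}$ is a Hardy-Littlewood type maximal operator. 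This is achieved by splitting the $y$-integral at a cut-off radius $R=R(x)$ tuned to balance the two regimes, controlling the short-range piece by dominating $G'$ on the segment by an average of $|G'(u)|$ over a ball of radius $\sim|y|$ around $x$, and the long-range piece by the crude bound $|u(x+y)-u(x)|\lesssim |u(x+y)|+|u(x)|$ reassembled into a square function.

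Once this pointwise bound is in place, H\"older's inequality with exponents $r_1,r_2$ together with the Hardy-Littlewood maximal inequality (applicable since $1<r_1\leq\infty$ and $1<r_2<\infty$) gives
\begin{equation*}
\||\nabla|^s G(u)\|_{L^r_x}\sim \|\mathcal{D}^s G(u)\|_{L^r_x}\lesssim \|\mathcal{M}(G'(u))\|_{L^{r_1}_x}\|\mathcal{D}^s u\|_{L^{r_2}_x}\lesssim \|G'(u)\|_{L^{r_1}_x}\||\nabla|^s u\|_{L^{r_2}_x}.
\end{equation*}
The main obstacle is the intermediate pointwise bound on $\mathcal{D}^s G(u)$: the quantity $G'((1-\ta)u(x)+\ta u(x+y))$ is evaluated at a complex number that is not equal to $u$ at any specific point, so one cannot literally bound it by $\mathcal{M}(G'(u))$ at $x$; one must pass through the dyadic argument of Christ-Weinstein \cite{CW2}, which I would invoke rather than reprove.
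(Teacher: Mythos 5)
The paper does not actually prove this lemma: the remark immediately following it simply refers the reader to \cite{CW2} (the Christ--Weinstein fractional chain rule), so there is no in-paper argument to compare against and your proposal has to stand on its own. Your $s=1$ case is fine (pointwise chain rule plus H\"older). For $s\in(0,1)$, however, the step you yourself single out as ``the key technical step''--- the pointwise bound $\mathcal{D}^s G(u)(x)\lesssim \mathcal{M}(G'(u))(x)\,\mathcal{D}^s u(x)$ --- is essentially the whole content of the lemma, and it cannot be obtained along the lines you sketch for a general $G\in C^1(\mathbb{C})$: the quantity $G'((1-\theta)u(x)+\theta u(x+y))$ is evaluated at complex numbers that need not be values of $u$ anywhere, and no average of $|G'(u)|$ over balls around $x$ dominates it unless $G'$ has extra structure (for instance $|G'(a+b)|\lesssim |G'(a)|+|G'(b)|$, which holds for the power nonlinearity $G(u)=|u|^pu$ but is not part of the hypothesis). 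Saying you would ``invoke the dyadic argument of Christ--Weinstein rather than reprove it'' concedes precisely this central step; moreover their argument is not a repair of your square-function scheme but a different one (telescoping $G(u)=\sum_N\big[G(P_{\le N}u)-G(P_{\le N/2}u)\big]$, writing each increment as $P_Nu\int_0^1 G'(P_{\le N/2}u+\theta P_Nu)\,d\theta$, and running vector-valued maximal-function and Littlewood--Paley square-function estimates). So, as a proof, the proposal reduces to the same citation the paper makes, preceded by a heuristic that cannot be completed as formulated.

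There is also an exponent-range problem in your reduction itself. The Stein--Strichartz difference characterization is not an equivalence over the whole range $1<r<\infty$: the direction you need on the right-hand side, $\|\mathcal{D}^s u\|_{L^{r_2}}\lesssim \||\nabla|^s u\|_{L^{r_2}}$, requires $r_2>\frac{2d}{d+2s}$. Indeed, for a smooth compactly supported $u$ one has $\mathcal{D}^s u(x)\sim |x|^{-(d+2s)/2}$ as $|x|\to\infty$, which lies in $L^{r_2}$ only for $r_2>\frac{2d}{d+2s}$, while $\||\nabla|^s u\|_{L^{r_2}}$ is finite for all $r_2>1$. Since the lemma allows arbitrary $1<r<r_2<\infty$, your scheme cannot yield the stated generality even if the pointwise bound were available; it would suffice for the application in this paper (where $r_2=2>\frac{2d}{d+2s}$), but that restriction should be acknowledged. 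In summary: the $s=1$ case is correct; the $s\in(0,1)$ case is an outline whose central inequality is unproven (and false for general $C^1$ nonlinearities as stated) and is ultimately delegated to the same reference \cite{CW2} on which the paper itself relies.
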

We now record the dispersive estimate for the Schr\"odinger operator and some standardized Strichartz estimate.
\begin{lemma}[Dispersive estimate]\label{22disp}
\begin{equation}\label{disp}
||e^{it\Delta} f ||_{L^{\infty}_x(\mathbb{R}^d)} \lesssim |t|^{-\frac{d}{2}}||f||_{L^1(\mathbb{R}^d)}.
\end{equation}
\end{lemma}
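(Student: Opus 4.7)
The plan is to realize the propagator $e^{it\Delta}$ as convolution with an explicit kernel, show that kernel is uniformly bounded by $|t|^{-d/2}$ in $x$, and then conclude via Young's convolution inequality. Schematically, by Fourier inversion one writes
\begin{equation*}
e^{it\Delta}f(x) = (2\pi)^{-d/2}\int_{\mathbb{R}^d} e^{ix\cdot\xi}\,e^{-it|\xi|^2}\widehat{f}(\xi)\,d\xi = (K_t\ast f)(x),
\end{equation*}
where formally $K_t(x)=(2\pi)^{-d}\int_{\mathbb{R}^d}e^{ix\cdot\xi-it|\xi|^2}\,d\xi$.

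The main technical point, and the place where care is required, is that the integral defining $K_t$ is not absolutely convergent: the integrand is only oscillatory at infinity. I would handle this by inserting a Gaussian regulariser $e^{-\varepsilon|\xi|^2}$ with $\varepsilon>0$, completing the square in the quadratic form $(\varepsilon+it)|\xi|^2 - ix\cdot\xi$, and applying the classical formula
\begin{equation*}
\int_{\mathbb{R}^d} e^{-(\varepsilon+it)|\xi|^2 + ix\cdot\xi}\,d\xi = \Bigl(\tfrac{\pi}{\varepsilon+it}\Bigr)^{d/2} e^{-|x|^2/(4(\varepsilon+it))},
\end{equation*}
with the principal branch of the square root. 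Passing to the limit $\varepsilon\to 0^+$ (justified for $f\in\mathcal{S}(\mathbb{R}^d)$ by dominated convergence applied after first integrating against $\widehat{f}$) yields the explicit kernel
\begin{equation*}
K_t(x) = (4\pi it)^{-d/2}\,e^{i|x|^2/(4t)},\qquad t\ne 0.
\end{equation*}

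Once the kernel is in hand the estimate is immediate: $|K_t(x)|=(4\pi|t|)^{-d/2}$ uniformly in $x$, so for $f\in\mathcal{S}(\mathbb{R}^d)$
\begin{equation*}
\|e^{it\Delta}f\|_{L^\infty_x} = \|K_t\ast f\|_{L^\infty_x} \leq \|K_t\|_{L^\infty_x}\|f\|_{L^1_x}\lesssim |t|^{-d/2}\|f\|_{L^1_x},
\end{equation*}
and a standard density argument extends this to arbitrary $f\in L^1(\mathbb{R}^d)$. The only genuine obstacle, as noted, is the rigorous evaluation of the oscillatory integral for $K_t$; the Gaussian regularisation above sidesteps it cleanly, although one could instead verify a posteriori that the claimed $K_t$ solves $(i\partial_t+\Delta)K_t=0$ with initial data $\delta_0$ and invoke uniqueness of the fundamental solution.
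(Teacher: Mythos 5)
Your proof is correct: the Gaussian-regularised computation of the kernel $K_t(x)=(4\pi i t)^{-d/2}e^{i|x|^2/(4t)}$, the uniform bound $|K_t|=(4\pi|t|)^{-d/2}$, Young's inequality, and the density argument are exactly the classical derivation of this estimate. The paper itself offers no proof of Lemma \ref{22disp} --- it simply records it as a standard fact with references to the textbook literature --- so your argument supplies precisely the standard proof that is being implicitly invoked.
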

\noindent \emph{Remark.} When the physical dimension of the function is higher, the decay is faster. Moreover, if we interpolate \eqref{disp} with $||e^{it\Delta}f||_{L^2(\mathbb{R}^d)}=||f||_{L^2(\mathbb{R}^d)}$ (Plancherel formula), we can obtain
\begin{equation}
||e^{it\Delta} f ||_{L^{r}_x(\mathbb{R}^d)} \lesssim |t|^{-(\frac{d}{2}-\frac{d}{r})}||f||_{L^{r^{'}}(\mathbb{R}^d)}.
\end{equation}
\noindent where $2\leq r \leq \infty$, $\frac{1}{r}+\frac{1}{r^{'}}=1$ and $t \neq 0$.

\begin{definition}[Admissible pair]\label{admi} Let $d\geq 5,$ we call a pair of exponent $(q,r)$ admissible if
\begin{align}
  \frac{2}{q}=d(\f{1}{2}-\f{1}{r}) \quad \textrm{with}\quad 2\leq q\leq \infty.
\end{align}
For a time interval $I$, we define
\begin{align}
  \|u\|_{\rm {S}(I)}:=\sup\{\|u\|_{L^q_tL^r_x(I\times\mathbb{R}^d)}:(q,r)  \text{   admissible}\}
\end{align}
We also define the dual of $ S(I)$ by $N(I)$, we note that
\begin{align}
  \|u\|_{\rm N(I)}\lesssim \|u\|_{L_t^{q'}L_x^{r'}(I\times \R^d)}\quad \text{for any admissible pair}~(q,r).
\end{align}
\end{definition}
\begin{proposition}[Strichartz estimate]
  Let ~$u:I\times \mathbb{R}^d\rightarrow \mathbb{C}$~be a solution to
  \begin{align}
    (i\partial_t+\Delta)u=F
  \end{align}
  and let~$ s\geq 0$, then
  \begin{align}
    \||\nabla|^su\|_{S(I)}\lesssim\|u(t_0)\|_{\dot{H}^s_x}+\||\nabla|^sF\|_{N(I)},
  \end{align}
  for any ~$t_0 \in I$.
\end{proposition}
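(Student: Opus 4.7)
The plan is to reduce everything to the endpoint-inclusive Strichartz inequalities for the free propagator and then feed the equation through Duhamel's formula. First I would observe that $|\nabla|^s$ commutes with both $e^{it\Delta}$ and the Duhamel integral, so applying $|\nabla|^s$ to both sides of
\begin{equation*}
u(t)=e^{i(t-t_{0})\Delta}u(t_{0})-i\int_{t_{0}}^{t}e^{i(t-s)\Delta}F(s)\,ds
\end{equation*}
reduces the claim to the case $s=0$: it is enough to prove
\begin{equation*}
\|u\|_{S(I)}\lesssim \|u(t_{0})\|_{L^{2}_{x}}+\|F\|_{N(I)}
\end{equation*}
and then apply that inequality to $|\nabla|^s u$, which solves $(i\partial_t+\Delta)(|\nabla|^s u)=|\nabla|^s F$.

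For the homogeneous piece $e^{it\Delta}u(t_{0})$, the core input is already isolated in the paper: Lemma \ref{22disp} gives the $L^{1}\to L^{\infty}$ dispersive bound $\|e^{it\Delta}f\|_{L^{\infty}_x}\lesssim |t|^{-d/2}\|f\|_{L^{1}_x}$, and this interpolates with the trivial $L^{2}\to L^{2}$ identity. A standard $TT^{*}$ manipulation then turns the pointwise-in-time decay into a spacetime bound: one writes $\|e^{it\Delta}f\|_{L^{q}_{t}L^{r}_{x}}^{2}$ as a pairing with $\int e^{-is\Delta}F(s)\,ds$, expands in the two time variables, and estimates the resulting kernel by Hardy--Littlewood--Sobolev in $t$ and the dispersive estimate in $x$. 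This produces the full admissible range $\frac{2}{q}=d(\tfrac{1}{2}-\tfrac{1}{r})$ with $2\le q\le\infty$ needed in Definition \ref{admi}.

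For the inhomogeneous piece $\int_{t_{0}}^{t}e^{i(t-s)\Delta}F(s)\,ds$, I would first handle the untruncated integral $\int_{\mathbb{R}}e^{i(t-s)\Delta}F(s)\,ds$ by combining the homogeneous Strichartz estimate with its dual: writing the operator as $TT^{*}$ immediately yields the bound in $L^{q}_{t}L^{r}_{x}$ by $\|F\|_{L^{\tilde q'}_{t}L^{\tilde r'}_{x}}$ for any two admissible pairs $(q,r),(\tilde q,\tilde r)$, and hence in particular by $\|F\|_{N(I)}$ after taking the infimum in Definition \ref{admi}. Passing from the integral over $\mathbb{R}$ to the retarded integral over $[t_{0},t]$ is the standard Christ--Kiselev step for non-sharp admissible pairs (where $q>2$ on both sides), which applies because the kernel is bounded between strictly different Lebesgue exponents.

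The main obstacle, as in any complete treatment, is the double endpoint $q=\tilde q=2$, which is exactly where Christ--Kiselev fails and where one cannot avoid the Keel--Tao bilinear argument: dyadically decompose the time separation $|t-s|\sim 2^{k}$, interpolate the dispersive bound against the energy estimate on each piece, and sum using an atomic/Whitney decomposition to recover the endpoint in the $d\ge 5$ setting we are in. Once the endpoint is secured, taking suprema over admissible pairs and combining the homogeneous and inhomogeneous bounds gives the full statement, and the reduction in the first paragraph promotes it to arbitrary $s\ge 0$.
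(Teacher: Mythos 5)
The paper does not prove this proposition at all: it is recorded as a ``standardized Strichartz estimate'' immediately after Lemma \ref{22disp} and Definition \ref{admi}, with the classical literature (Strichartz, Ginibre--Velo, Keel--Tao) implicitly standing in for the proof. Your outline is exactly that classical argument, and it is essentially correct: the commutation of $|\nabla|^s$ with the propagator reduces matters to $s=0$; the dispersive bound of Lemma \ref{22disp} plus $TT^*$ and Hardy--Littlewood--Sobolev give the non-endpoint homogeneous and untruncated inhomogeneous estimates; Christ--Kiselev transfers these to the retarded integral; and the Keel--Tao bilinear (Whitney-type) interpolation handles the endpoint. Two small imprecisions are worth flagging, though neither is a genuine gap. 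First, the $TT^*$/HLS step does \emph{not} by itself ``produce the full admissible range $2\le q\le\infty$'': at $q=2$ the relevant time kernel is $|t-s|^{-1}$ and HLS fails, so the \emph{homogeneous} endpoint $(2,\tfrac{2d}{d-2})$ already requires the Keel--Tao argument, not only the double-endpoint retarded estimate you single out later. Second, Christ--Kiselev applies whenever the output time exponent strictly exceeds the dual input exponent, i.e.\ whenever $(q,\tilde q)\neq(2,2)$, which is slightly broader than your ``$q>2$ on both sides''; the mixed cases with exactly one exponent equal to $2$ are covered either this way or directly by the retarded estimates in Keel--Tao. With those clarifications your proposal is a complete and standard route to the stated estimate, and since the paper supplies no argument of its own there is nothing in the text for it to diverge from.
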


The next proposition says in contrast with the classical Duhamel formula \eqref{eq:duhamel} that there is no scattered wave at the endpoint of the lifespan $I$ for almost periodic solutions. We refer to \cite{TVZ} for more information.
\begin{proposition}[No-waste Duhamel formula]\label{le1}
Let ~$u:I\times \mathbb{R}^d\rightarrow \mathbb{C}$~be a maximal-lifespan almost periodic solution to \eqref{01}, then for all~$ t \in I$,
\begin{align}
  u(t)&=\lim_{T\nearrow \sup I}i\int_{t}^Te^{i(t-s)\Delta}(|u|^pu)(s)ds\\
  &=-\lim_{T\searrow \inf I}i\int_{T}^te^{i(t-s)\Delta}(|u|^pu)(s)ds
\end{align}
as a weak limit in ~$\dot{H}^{s_c}_x(\mathbb{R}^d)$~.
\end{proposition}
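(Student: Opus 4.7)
The plan is to derive both no-waste identities from the standard Duhamel formula \eqref{eq:duhamel} by showing that the free evolution of the endpoint data tends weakly to zero in $\dot{H}^{s_c}_x$ as the endpoint approaches the boundary of $I$. For $t<T$ both in $I$, applying \eqref{eq:duhamel} with initial time $T$ rearranges to
$$u(t)=e^{i(t-T)\Delta}u(T)+i\int_t^T e^{i(t-s)\Delta}(|u|^pu)(s)\,ds,$$
so the forward formula reduces to proving $e^{i(t-T)\Delta}u(T)\rightharpoonup 0$ in $\dot{H}^{s_c}_x$ as $T\nearrow\sup I$; the backward identity is entirely symmetric.

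For the weak convergence, by density it suffices to test against a fixed Schwartz $\phi$ with $\hat\phi$ smooth and supported in an annulus $\{a\leq|\xi|\leq b\}$ bounded away from $0$ and $\infty$. Using self-adjointness of the propagator, the pairing becomes $\langle u(T),\psi_T\rangle_{\dot{H}^{s_c}_x}$ where $\psi_T=e^{i(T-t)\Delta}\phi$. I would split the spatial integral using the ball $B(x(T),C(\eta)/N(T))$ furnished by \eqref{eq:7}. On the exterior, \eqref{eq:7} provides $\|\mathbf{1}_{\text{ext}}|\nabla|^{s_c}u(T)\|_{L^2_x}<\eta^{1/2}$, and Cauchy-Schwarz against $|\nabla|^{s_c}\psi_T$, whose $L^2_x$ norm equals $\|\phi\|_{\dot{H}^{s_c}_x}$, bounds the exterior contribution by $O(\eta^{1/2})$. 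On the interior, Cauchy-Schwarz combined with the volume factor gives
$$\Big|\int_{|x-x(T)|\leq C(\eta)/N(T)}|\nabla|^{s_c}u(T)\cdot\overline{|\nabla|^{s_c}\psi_T}\,dx\Big|\lesssim \|u(T)\|_{\dot{H}^{s_c}_x}\cdot \||\nabla|^{s_c}\psi_T\|_{L^\infty_x}\cdot(C(\eta)/N(T))^{d/2}.$$

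It remains to make $\||\nabla|^{s_c}\psi_T\|_{L^\infty_x}\cdot N(T)^{-d/2}$ vanish as $T\nearrow\sup I$. If $\sup I=\infty$, then $|T-t|\to\infty$, and Lemma \ref{22disp} applied to $|\nabla|^{s_c}\phi$ (which is Schwartz since $\hat\phi$ is smooth and compactly supported away from the origin) yields $\||\nabla|^{s_c}\psi_T\|_{L^\infty_x}\lesssim |T-t|^{-d/2}\||\nabla|^{s_c}\phi\|_{L^1_x}\to 0$. If $\sup I<\infty$, then $|T-t|$ stays bounded so $|\nabla|^{s_c}\psi_T$ is uniformly bounded in $L^\infty_x$, and one invokes the standard fact that $N(T)\to\infty$ as $T\nearrow\sup I$: otherwise, almost periodicity \eqref{eq:7} would place $\{u(T)\}$ in a precompact family in $\dot{H}^{s_c}_x$ modulo symmetries, and Theorem \ref{wellposed} would then extend the solution uniformly past $\sup I$, contradicting maximality. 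In either regime the interior contribution vanishes, and choosing $\eta$ small before sending $T\to\sup I$ makes the full pairing arbitrarily small. The main obstacle is precisely this finite-time scenario: one must leverage a qualitative compactness consequence of almost periodicity rather than the quantitative bound \eqref{eq:7} alone, whereas the global case is a straightforward dispersive decay after the density reduction.
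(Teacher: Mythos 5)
Your overall route---rewriting \eqref{eq:duhamel} with initial time $T$ so that the claim reduces to $e^{i(t-T)\Delta}u(T)\rightharpoonup 0$ in $\dot H^{s_c}_x$, then testing against Schwartz functions with $\hat\phi$ supported in an annulus---is the standard argument behind this proposition (the paper gives no proof and defers to \cite{TVZ}; see also \cite{kv2}). Your treatment of the finite-endpoint case, using only the spatial half of \eqref{eq:7}, the uniform bound on $\||\nabla|^{s_c}e^{i(T-t)\Delta}\phi\|_{L^\infty_x}$ over bounded times, and $N(T)\to\infty$ as $T\nearrow\sup I<\infty$, is fine; just note that the extension argument you sketch really needs the uniform local theory/stability on a compact set of profiles modulo symmetries (with existence time $\gtrsim N(T)^{-2}\gtrsim 1$ since $N(T)$ is assumed bounded), not the bare statement of Theorem \ref{wellposed}, though this is the standard fact.

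The genuine gap is in the case $\sup I=\infty$: your interior bound is $\lesssim C(\eta)^{d/2}\bigl(N(T)\,|T-t|\bigr)^{-d/2}\||\nabla|^{s_c}\phi\|_{L^1_x}$, and you only show that the factor $|T-t|^{-d/2}$ tends to $0$ (Lemma \ref{22disp}), leaving $N(T)^{-d/2}$ uncontrolled. The proposition is stated for an arbitrary maximal-lifespan almost periodic solution; the normalization $N(t)\geq 1$ only becomes available after the reduction of Theorem \ref{reduction} and is not a hypothesis here, so nothing prevents $N(T)\to 0$ as $T\to\infty$, in which case the product need not vanish and the step ``in either regime the interior contribution vanishes'' fails as written. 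The hole is repairable with tools already at hand: for those $T$ with $C(\eta)N(T)$ below the inner radius $a$ of the annulus supporting $\hat\phi$, the frequency half of \eqref{eq:7} gives $\|P_{\mathrm{ann}}u(T)\|_{\dot H^{s_c}_x}\lesssim\eta^{1/2}$, and since $\hat\psi_T=e^{-i(T-t)|\xi|^2}\hat\phi$ lives in the same annulus the whole pairing is $O(\eta^{1/2}\|\phi\|_{\dot H^{s_c}_x})$ for such $T$; for the remaining $T$ one has $N(T)\gtrsim_{\eta,a}1$ and your dispersive estimate closes the argument. (Alternatively, the local constancy property of $N(t)$, namely $N(T)^{-2}\lesssim N(t)^{-2}+|T-t|$, yields $(N(T)|T-t|)^{-d/2}\to 0$ directly.) As written, the infinite-endpoint case covers the paper's applications (where $N(t)\geq 1$ after reduction) but not the proposition in the generality in which it is stated.
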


Based on the above basic results and tools, we are ready to exclude the critical element for the three scenarios in the following three sections respectively.\vspace{3mm}

\section{finite time blow-up scenario }
In this section, we deal with the Finite time blow-up scenario. We will use no-waste Duhamel formula and interpolation to show
 \beqq
 \lim \limits_{t\rightarrow T_{\text{max}}}\|u(t)\|_{L_x^2}\rightarrow 0
 \eeqq in the context of \eqref{eq:11}, where $T_{max}=\sup(I)$ and $I$ is the lifespan of $u$. According to the conservation of mass, this implies $u\equiv 0$, which contradicts the fact that $u$ is a blowup solution to \eqref{01}.
\begin{theorem}[Finite time blow-up scenario]\label{2ft}
\noindent If $u: I \times \R^d $ is an almost periodic solution satisfying \eqref{eq:11}, then $u(x,t) \equiv 0$.
\end{theorem}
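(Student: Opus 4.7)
The plan is, as suggested by the text preceding the theorem, to prove that $\|u(t)\|_{L^2_x}\to 0$ as $t\to T_{\max}:=\sup I$, and then invoke mass conservation on the trailing subinterval to force $u\equiv 0$, contradicting the blow-up of the critical element. The two workhorse tools are the no-waste Duhamel formula (Proposition \ref{le1}) and Bernstein's inequalities (Lemma \ref{2Bineq}); the hypothesis $\sup I<\infty$ from the scenario \eqref{eq:11} provides the shrinking interval of integration $[t,T_{\max})$.

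First I would split $u(t)=P_{\leq N_0}u(t)+P_{>N_0}u(t)$ for a parameter $N_0\gg 1$ to be sent to infinity at the end. The high-frequency piece is handled directly by Plancherel and the uniform bound $\|u\|_{L^\infty_t\dot{H}^{s_c}_x}<\infty$, giving $\|P_{>N_0}u(t)\|_{L^2_x}\lesssim N_0^{-s_c}$ uniformly in $t\in I$. For the low-frequency piece I would apply $P_{\leq N_0}$ to the first identity of Proposition \ref{le1}, take $L^2_x$ norms, and use Minkowski together with the $L^2$-unitarity of $e^{i(t-s)\Delta}$, obtaining
\[
\|P_{\leq N_0}u(t)\|_{L^2_x}\leq \int_t^{T_{\max}}\|P_{\leq N_0}(|u|^p u)(s)\|_{L^2_x}\,ds.
\]
Bernstein bounds the integrand by $N_0^{d(1/r-1/2)}\||u|^{p+1}\|_{L^r_x}$ for any $r\in(1,2)$; choosing $r$ so that $(p+1)r=\tfrac{2d}{d-2s_c}$ equals the critical Sobolev exponent yields the Bernstein power $d(1/r-1/2)=2-s_c$, and the Sobolev embedding $\dot{H}^{s_c}\hookrightarrow L^{2d/(d-2s_c)}$ combined with the uniform critical-norm bound then gives $\|P_{\leq N_0}u(t)\|_{L^2_x}\lesssim N_0^{2-s_c}(T_{\max}-t)$.

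Combining the two pieces, $\|u(t)\|_{L^2_x}\lesssim N_0^{-s_c}+N_0^{2-s_c}(T_{\max}-t)$; sending $t\to T_{\max}$ first and then $N_0\to\infty$ forces $\lim_{t\to T_{\max}}\|u(t)\|_{L^2_x}=0$. In particular $u(t)\in L^2_x$ on some subinterval $[t_0,T_{\max})\subset I$, so mass conservation applies there and pins $\|u(t_0)\|_{L^2_x}=\lim_{t\to T_{\max}}\|u(t)\|_{L^2_x}=0$, whence $u\equiv 0$ by uniqueness, contradicting the blow-up of the critical element. The main technical subtlety I expect is legitimizing the $L^2_x$ integral identity above: Proposition \ref{le1} asserts the no-waste Duhamel formula only as a weak limit in $\dot{H}^{s_c}_x$, and one must upgrade this to the $L^2_x$ formulation, which is handled by testing against $L^2_x$ functions and using the uniform $L^2_x$ bound on $P_{\leq N_0}(|u|^p u)(s)$ derived above together with dominated convergence.
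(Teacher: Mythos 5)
Your argument is correct, but it is not the route the paper takes. The paper's proof works at fixed (negative) regularity: it applies the no-waste Duhamel formula and estimates $\||\nabla|^{s_c-1}u(t)\|_{L^2_x}$ by Strichartz with the dual endpoint pair $L_t^2L_x^{2d/(d+2)}$, shifts the derivative by Sobolev onto $F(u)$, uses H\"older in time on the shrinking interval $(t,T_{\max})$ together with the $L_t^\infty L_x^{dp/2}$ bound \eqref{Linfty} to get $\||\nabla|^{s_c-1}u(t)\|_{L^2_x}\lesssim (T_{\max}-t)^{1/2}$, and then interpolates with the uniform $\dot H^{s_c}$ bound to obtain $\|u(t)\|_{L^2_x}\lesssim (T_{\max}-t)^{s_c/2}$. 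You instead make a Littlewood--Paley split at a scale $N_0$: high frequencies are controlled by Bernstein and the critical bound ($\lesssim N_0^{-s_c}$), and low frequencies by applying $P_{\leq N_0}$ to the no-waste Duhamel formula and using only $L^2$-unitarity plus Bernstein with $r=\tfrac{2d}{d+4-2s_c}\in(1,2)$, whose exponent $d(\tfrac1r-\tfrac12)=2-s_c$ you computed correctly, giving $\lesssim N_0^{2-s_c}(T_{\max}-t)$; optimizing $N_0\sim(T_{\max}-t)^{-1/2}$ would reproduce exactly the paper's rate $(T_{\max}-t)^{s_c/2}$. What your version buys is elementarity: no Strichartz estimate, no fractional derivatives of the nonlinearity, just unitarity, Bernstein and Sobolev embedding of the critical norm; what the paper's version buys is brevity, reaching the decay in one chain of inequalities plus a single interpolation. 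Both proofs share the same skeleton (no-waste Duhamel on the finite trailing interval, then mass conservation forces $u\equiv 0$), and both face the same technical point you rightly flag, namely upgrading the weak $\dot H^{s_c}$ limit in Proposition \ref{le1} to a norm bound; the clean justification is that the truncated Duhamel integrals are uniformly bounded in $L^2_x$, so along a subsequence they converge weakly in $L^2_x$, the limit is identified with $P_{\leq N_0}u(t)$ distributionally, and weak lower semicontinuity of the norm gives the estimate --- a level of care the paper itself also elides.
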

\begin{proof}
We argue by contradiction, assuming $ u$ is the Finite time blow-up solution to \eqref{01} with $T_{\text{max}}<\infty$.\vspace{2mm}

By Strichartz estimate, \eqref{eq:8}, Proposition \ref{le1}, Sobolev inequality, H\"older's inequality and \eqref{Linfty}, we obtain
 \begin{align*}
   \||\nabla|^{s_c-1}u(t)\|_{ L_x^2}&\lesssim \lim_{T\rightarrow T_{max}} \|\int_t^T |\nabla|^{s_c-1}e^{i(t-s)\Delta}F(u)(s)ds\|_{L_t^\infty L_x^2(I\times \R^d )}\\
   &\lesssim  \||\nabla|^{s_c-1}F(u)\|_{L_t^{2}L_x^{\frac{2d}{d+2}}((t,T_{\text{max}})\times \R^d )}\\
   &\lesssim  \|F(u)\|_{L_t^{2}L_x^{\frac{pd}{2p+2}}((t,T_{\text{max}})\times \R^d )}\\
   &\lesssim (T_{\text{max}}-t)^{\frac{1}{2}}\|u\|_{L_t^\infty L_x^{\f{dp}{2}}((t,T_{\text{max}})\times \R^d )}^{p+1}\\
   &\lesssim (T_{\text{max}}-t)^{\frac{1}{2}}
 \end{align*}
 By interpolating with
 \beqq
\||\nabla|^{s_c}u\|_{L^\infty_t L_x^{2}(I\times \R^d )}<\infty,
 \eeqq
  we see
\begin{equation}
  ||u(t)||_{ L^2_{x}} \lesssim (T_{max}-t)^{\frac{s_c}{2}}.
\end{equation}
Furthermore, when $t\rightarrow T_{max}, \|u(t)\|_{L_x^2} \rightarrow 0.$ Therefore, according to the conservation of mass,  $u\equiv 0$ which is inconsistent with the fact that $u$ is a blowup solution.

\end{proof}
\section{Negative regularity property }
For this problem, we take advantage of the high dimension setting to use double Duhamel trick to obtain negative regularity of the solution which is one of the most essential steps in this paper (see \cite{Iteam2,Dodsonbook,KV3,kv2,V1,V2}). Furthermore, we use the negative regularity and the almost periodicity to exclude the critical element for the Low-to-high frequency cascade scenario and the Soliton-like scenario. We use the following lemmas to obtain the negative regularity.

\begin{lemma}[Double Duhamel Lemma]\label{dou}
\noindent If $I$ is the maximal interval of existence and $u$ is an almost periodic solution, then for any $t_1\in I$,
\begin{equation}
\langle u(t_1),u(t_1) \rangle_{\dot{H}^{s_c}}=-\lim\limits_{t_0 \rightarrow inf(I)} \lim\limits_{t_2 \rightarrow sup(I)} \langle \int_{t_0}^{t_1}e^{i(t_1-\tau)\Delta}F(u(\tau))d\tau,\int_{t_1}^{t_2}e^{i(t_1-t)\Delta}F(u(t))dt \rangle_{\dot{H}^{s_c}}.
\end{equation}
\end{lemma}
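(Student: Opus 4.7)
The plan is to invoke the no-waste Duhamel formula from Proposition \ref{le1} twice: once in the backward direction to rewrite one of the two copies of $u(t_1)$ in the inner product, and once in the forward direction for the other copy, then pass the two limits through the $\dot{H}^{s_c}$ pairing one at a time while tracking the resulting factors of $\pm i$.

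First I would rewrite Proposition \ref{le1} in the explicit form
\begin{align*}
u(t_1) = -i\lim_{t_0 \searrow \inf I}\int_{t_0}^{t_1} e^{i(t_1-\tau)\Delta} F(u(\tau))\,d\tau = i\lim_{t_2 \nearrow \sup I}\int_{t_1}^{t_2} e^{i(t_1-t)\Delta} F(u(t))\,dt,
\end{align*}
both limits understood as weak limits in $\dot{H}^{s_c}_x$, where $F(u)=|u|^p u$. Denote by $A_{t_0}$ and $B_{t_2}$ the two truncated Duhamel integrals on the right; then $A_{t_0}\rightharpoonup i\,u(t_1)$ and $B_{t_2}\rightharpoonup -i\,u(t_1)$ in $\dot{H}^{s_c}_x$.

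Because a joint weak limit in both slots of an inner product need not commute with the pairing, I would take the two limits iteratively. Fixing $t_0$ and letting $t_2 \nearrow \sup I$, weak convergence in the second slot against the fixed vector $A_{t_0}\in \dot{H}^{s_c}_x$, together with the conjugate linearity of $\langle\cdot,\cdot\rangle_{\dot{H}^{s_c}}$ in the second argument, gives
\begin{align*}
\lim_{t_2 \nearrow \sup I}\langle A_{t_0}, B_{t_2}\rangle_{\dot{H}^{s_c}} = \langle A_{t_0}, -i\,u(t_1)\rangle_{\dot{H}^{s_c}} = i\,\langle A_{t_0}, u(t_1)\rangle_{\dot{H}^{s_c}}.
\end{align*}
Letting $t_0 \searrow \inf I$ and using $A_{t_0}\rightharpoonup i\,u(t_1)$ in the first slot against the fixed vector $u(t_1)$ then yields
\begin{align*}
\lim_{t_0 \searrow \inf I}\lim_{t_2 \nearrow \sup I}\langle A_{t_0}, B_{t_2}\rangle_{\dot{H}^{s_c}} = i\,\langle i\,u(t_1),\, u(t_1)\rangle_{\dot{H}^{s_c}} = -\langle u(t_1), u(t_1)\rangle_{\dot{H}^{s_c}},
\end{align*}
which rearranges to the claimed identity.

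The only preliminary verification is that $A_{t_0}$ and $B_{t_2}$ genuinely lie in $\dot{H}^{s_c}_x$ for each fixed $t_0, t_2 \in I$, so that the pairings and weak-convergence steps above are meaningful. This follows by applying the Strichartz inequality to the truncated Duhamel integrals and controlling $F(u)$ via the uniform critical bound \eqref{eq:8} together with \eqref{Linfty}, in the same spirit as the computation of $\||\nabla|^{s_c-1}u(t)\|_{L^2_x}$ in Section 3. The main conceptual obstacle is precisely that Proposition \ref{le1} supplies only weak, not strong, limits, so the two limits cannot be taken simultaneously; once one insists on an iterated limit and keeps careful track of the antilinearity in the second slot, the identity together with its crucial overall minus sign falls out automatically.
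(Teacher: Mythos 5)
Your proposal is correct and coincides with the argument the paper implicitly intends (Lemma \ref{dou} is stated without proof, resting on the standard double-Duhamel trick from the cited literature): one applies the no-waste Duhamel formula of Proposition \ref{le1} forwards and backwards at $t_1$, pairs the two truncated integrals, and passes the iterated weak limits through one slot of the $\dot{H}^{s_c}$ inner product at a time, the factors of $\pm i$ producing the overall minus sign. Your handling of the order of limits, the antilinearity, and the finiteness of the truncated integrals is sound, so nothing essential is missing.
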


Using Lemma \ref{dou} and no waste Duhamel formula, we can obtain `negative regularity' of the solution $u$. First we will prove an important improvement over the Sobolev embedding theorem in the sense that there exits some $p< \frac{2d}{d-2s_c}$ such that $u \in L_x^p$. Second we will use the result obtained in the first step to show there exists $s_0$ such that $u\in \dot{H}^{s_c-s_0}$. Iterating the second steps several times, we will ultimately obtain the desired result. Now we turn to the details of the procedure, first we will establish the following key lemma  which says that $u$ has additional decay.
\begin{lemma}\label{base}
\noindent Suppose $u$ is an almost periodic solution satisfying (1.7), when $d\geq 5$, for any $q$ satisfying
\begin{equation}\label{eq:5}
\frac{2(d+4-2s_c)}{d+4-4s_c}<q\leq \frac{2d}{d-2s_c}
\end{equation}
we have
\begin{equation}\label{eq:4}
||u(t)||_{L^{\infty}_{t}L^q_{x}(\mathbb{R}\times \mathbb{R}^d)} \lesssim_q 1.
\end{equation}

\end{lemma}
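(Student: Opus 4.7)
The plan is to split $u=P_{\leq 1}u+P_{>1}u$ and control each piece in $L^q$ by a different mechanism. For the high-frequency piece $P_{>1}u$, apply a Littlewood-Paley decomposition: for each dyadic $N\geq 1$, the critical-norm hypothesis $\||\nabla|^{s_c}u\|_{L^\infty_tL^2_x}\lesssim 1$ combined with Bernstein (Lemma \ref{2Bineq}) yields $\|P_Nu\|_{L^\infty_tL^2_x}\lesssim N^{-s_c}$, while Sobolev embedding gives $\|P_Nu\|_{L^\infty_tL^{2d/(d-2s_c)}_x}\lesssim 1$; interpolating between these two bounds and summing geometrically over $N\geq 1$ controls $\|P_{>1}u\|_{L^\infty_tL^q_x}$ for every $q\in[2,2d/(d-2s_c)]$.

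For the low-frequency piece $P_{\leq 1}u$, apply the no-waste Duhamel formula of Proposition \ref{le1}:
\beqq
P_{\leq 1}u(t)=-i\lim_{T\nearrow\infty}\int_t^{T}e^{i(t-s)\Delta}P_{\leq 1}(|u|^pu)(s)\,ds.
\eeqq
Sobolev embedding combined with H\"older places $F(u)=|u|^pu$ in $L^\infty_tL^{a}_x$ with $a=\frac{2d}{d+4-2s_c}$. The kernel $\|e^{i(t-s)\Delta}P_{\leq 1}F(u)\|_{L^q_x}$ is estimated by Bernstein for short times $|t-s|\lesssim 1$, where the frequency localization together with the $L^2$-isometry of $e^{i\tau\Delta}$ allows one to reduce to $\|F(u)\|_{L^a_x}\lesssim 1$, and by the dispersive estimate of Lemma \ref{22disp} combined with Bernstein for $|t-s|\gtrsim 1$, giving a factor $|t-s|^{-d(1/2-1/q)}\|F(u)\|_{L^a_x}$.

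The principal technical difficulty is extending the admissible range of $q$ down to the sharp endpoint $q_0=\frac{2(d+4-2s_c)}{d+4-4s_c}$. A direct single-scale use of the dispersive decay demands $d(1/2-1/q)>1$, i.e.\ $q>\frac{2d}{d-2}$, which is strictly larger than $q_0$. To reach $q_0$, one refines the argument by a dyadic Littlewood-Paley decomposition $P_{\leq 1}F(u)=\sum_{N\leq 1}P_NF(u)$ and, for each $N$, balances the short-time Bernstein bound against the dispersive bound, whose crossover sits at the time scale $|t-s|\sim N^{-2}$. Careful bookkeeping of the exponents in this per-frequency optimization, followed by a geometric sum over dyadic $N\leq 1$, yields the sharp range $q>q_0$, the endpoint $q_0$ appearing precisely where the resulting summation becomes critical.
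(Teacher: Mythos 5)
Your high-frequency piece is fine, but the treatment of $P_{\leq 1}u$ has a structural gap that no amount of bookkeeping in the Bernstein/dispersive crossover can repair. The only quantitative input you feed into the Duhamel integral is the scaling-critical bound $\|F(u)\|_{L^\infty_t L^a_x}\lesssim 1$ with $a=\frac{2d}{d+4-2s_c}$ (itself just Sobolev from the critical norm), together with scale-invariant linear estimates (Bernstein, the $L^2$-isometry, the dispersive estimate). Any chain of such estimates produces a per-frequency bound consistent with scaling: if you carry out your own scheme at a single dyadic frequency $N=2^j\leq 1$ with crossover $|t-s|\sim N^{-2}$, the short-time piece gives $N^{-2}\cdot N^{d(\frac1a-\frac1q)}=N^{d(\frac12-\frac1q)-s_c}$ and the long-time piece gives exactly the same exponent (this is forced: under the scaling \eqref{scale} one has $\|P_Nu_\lambda\|_{L^q}=\lambda^{d(\frac12-\frac1q)-s_c}\|P_{N/\lambda}u\|_{L^q}$, so scale-invariant inputs cannot beat $N^{d(\frac12-\frac1q)-s_c}$, which is precisely what trivial Bernstein applied to \eqref{eq:8} already gives). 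For every $q\leq\frac{2d}{d-2s_c}$ this exponent is $\leq 0$, so the dyadic sum over $N\leq 1$ diverges; in particular your claim that a per-frequency optimization "yields the sharp range $q>q_0$" is not correct, and the obstruction is not the integrability condition $d(\frac12-\frac1q)>1$ you identify, but the fact that the whole argument is scale-critical while the conclusion \eqref{eq:4} is subcritical in the low frequencies.

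The missing ingredient is the almost periodicity together with the reduction $N(t)\geq 1$, which you never use beyond the no-waste Duhamel formula. This is the only non-scale-invariant information available, and it enters through \eqref{eq:7}: for any $\eta>0$ there is $j_0$ with $\|P_{\leq 2^{j_0}}u\|_{L^\infty_t\dot H^{s_c}}\leq\eta$, uniformly in time. The paper exploits this by decomposing the nonlinearity at each frequency, $F(u)=F(P_{\leq j}u)+O(|P_{>j}u|\,|P_{\leq j}u|^{p})+O(|P_{>j}u|^{p+1})$, estimating the Duhamel integral in a norm $L^r_x$ whose dual exponent lies below $a$ (so the nonlinear pieces must be handled by H\"older/interpolation rather than by $\|F(u)\|_{L^a}$ alone), and closing a bootstrap for the weighted quantity $\sup_{j\leq j_0}2^{\epsilon j}2^{-2j(\frac dm-1)}\|P_ju\|_{L^\infty_tL^r_x}$ in \eqref{eq:9}: each term carries a positive power of $\eta$ and a power strictly less than one of the bootstrap quantity, which is what produces the gain over scaling. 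Only after this self-improving estimate is established does interpolation with $2^{js_c}\|P_ju\|_{L^2}\lesssim 1$ give \eqref{eq:4} down to (but not including) the endpoint $q_0$. Without some version of this smallness-driven bootstrap, your linear argument cannot reach any $q\leq\frac{2d}{d-2s_c}$.
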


\begin{proof}
From \eqref{eq:8}  and Sobolev embedding, we know \eqref{eq:4} holds when $q=\frac{2d}{d-2s_c}$.   It suffices to consider the left endpoint case in \eqref{eq:5}. Note \eqref{eq:7} and the fact that we have frozen  $N(t)\geq 1$, thus for any $\eta>0$, there exists $j_0(\eta) \in \mathbb{Z}$  such that
\begin{equation}
||P_{j_0}u(t)||_{L^{\infty}_t \dot{H}^{s_c}(\mathbb{R}\times \mathbb{R}^d)} \leq \eta.
\end{equation}

\noindent Now we take $r$ satisfying $\frac{1}{r}=\frac{1}{2}-\frac{1}{m}$ where $m=\frac{d}{2-s_c}-2$. \vspace{2mm}

\noindent It suffices to prove the following estimate:
\begin{equation}\label{eq:9}
\sup_{j\leq j_0}2^{\epsilon j} 2^{-2j(\frac{d}{m}-1)} ||P_j u||_{L_t^{\infty}L^r_x} \lesssim_{\epsilon} 1.
\end{equation}
\noindent Indeed \eqref{eq:4} follows by interpolating with
 \begin{equation}
2^{js_c}||P_ju||_{L^2(\mathbb{R}^d)}<\infty.
\end{equation}

\noindent Now our goal is to prove inequality \eqref{eq:9}. First we will show:
\begin{equation}\label{eq:13}
||P_j u(t)||_{L_t^{\infty}L_x^r} \lesssim 2^{2j(\frac{d}{m}-1)}||P_j F(u(t))||_{L_t^{\infty}L_x^{r^{'}}}.
\end{equation}
\noindent For any given $t\in \R$, by no-waste Duhamel formula,  it suffices to show
\begin{align}
\|P_j u(t)\|_{L_x^r}&\leq \left\|\int_t^{t+2^{-2j}}e^{i(t-s)\Delta}P_jF(u(s))ds\right\|_{L_x^r}+\left\|\int_{t+2^{-2j}}^\infty e^{i(t-s)\Delta}P_jF(u(s))ds\right\|_{L_x^r}\\
&\lesssim 2^{2j(\frac{d}{m}-1)}||P_j F(u(t))||_{L_t^{\infty}L_x^{r^{'}}}.
\end{align}
By Bernstein's inequality \eqref{2Bineq}, we estimate
\begin{align}\label{eq:10}
\begin{split}
\left\|\int_t^{t+2^{-2j}}e^{i(t-s)\Delta}P_jF(u(s))ds\right\|_{L_x^r}&\lesssim 2 ^{jd(\frac{1}{2}-\frac{1}{r})}\left\|\int_t^{t+2^{-2j}}e^{i(t-s)\Delta}P_jF(u(s))ds\right\|_{L_x^2}\\
&\lesssim  2^{2j(\frac{d}{m}-1)}||P_j F(u(t))||_{L_t^{\infty}L_x^{r^{'}}}.
\end{split}
\end{align}
\noindent On the other hand, using dispersive estimate \eqref{disp},
\begin{align}\label{eq:12}
\begin{split}
\left\|\int_{t+2^{-2j}}^\infty e^{i(t-s)\Delta}P_jF(u(s))ds\right\|_{L_x^r}&\lesssim \int_{t+2^{-2j}}^\infty \frac{1}{|t-s|^{\frac{d}{m}}}\left\|P_jF(u(s))\right\|_{L_x^r}ds\\
&\lesssim  2^{2j(\frac{d}{m}-1)}||P_j F(u(t))||_{L_t^{\infty}L_x^{r^{'}}}.
\end{split}
\end{align}
\noindent Thus, \eqref{eq:13} follows by combing \eqref{eq:10} and \eqref{eq:12}.\vspace{2mm}

\noindent It remains to prove \eqref{eq:9}. To this end, we decompose
\begin{equation}
F(u)=F(P_{\leq j}u)+O(|P_{>j}u|\cdot |P_{\leq j}u|^{\frac{4}{d-2s_c}})+O(|P_{>j}u|^{\frac{d+4-2s_c}{d-2s_c}}).
\end{equation}
 Now we consider the case $d=5$ and the case $d\geq 6$ respectively.\vspace{2mm}

\noindent When $d=5$,
\begin{equation}\label{eq:20}
\sup_{j\leq j_0}2^{\epsilon j}  ||P_j F(P_{\leq j}u)||_{L_t^{\infty}L^{r^{'}}_x} \lesssim_{\epsilon} \eta^{\frac{4(1-\theta)}{d-2s_c}}(\sup_{j\leq j_0}2^{\epsilon j} 2^{-2(\frac{d}{m}-1)} ||P_j u||_{L_t^{\infty}L^r_x})^{\frac{4\theta }{d-2s_c}},
\end{equation}
where $\theta=\frac{m}{2}-\frac{d-2s_c}{4}$ (and $\frac{4\theta }{d-2s_c}<1$). \vspace{2mm}

\noindent Meanwhile, we can obtain
\begin{equation}
\sup_{j\leq j_0}2^{\epsilon j}||P_{>j}u||_{L^2}||P_{\leq j}u||^{\frac{4}{d-2s_c}}_{L^{\frac{4m}{d-2s_c}}} \lesssim \eta^{\frac{4(1-\theta)}{d-2s_c}}(\sup_{j\leq j_0}2^{\epsilon j} 2^{-2(\frac{d}{m}-1)} ||P_j u||_{L_t^{\infty}L^r_x})^{\frac{4\theta }{d-2s_c}},
\end{equation}
\noindent And
\begin{equation}\label{eq:21}
\sup_{j\leq j_0} 2^{\epsilon j}||F(P_{>j} u)||_{L^{r^{'}}}\lesssim_{\epsilon} \eta^{s}(\sup_{j\leq j_0}2^{\epsilon j} 2^{-2(\frac{d}{m}-1)} ||P_j u||_{L_t^{\infty}L^r_x})^{\tau}+C_{\eta}
\end{equation}
\noindent where $s>0$ and $\tau=m(\frac{1}{2}\frac{d+4-2s_c}{d-2s_c}-\frac{1}{r^{'}})<1$. According to \eqref{eq:20}-\eqref{eq:21},
\begin{equation}
\sup_{j\leq j_0}2^{\epsilon j} 2^{-2(\frac{d}{m}-1)} ||P_j u||_{L_t^{\infty}L^r_x} \lesssim C_{\eta}+C_{\epsilon}\eta^{\frac{4(1-\theta)}{d-2s_c}}(\sup_{j\leq j_0}2^{\epsilon j} 2^{-2(\frac{d}{m}-1)} ||P_j u||_{L_t^{\infty}L^r_x})^{\frac{4\theta }{d-2s_c}}.
\end{equation}
\noindent Choosing $\eta(\epsilon)>0$ sufficiently small, \eqref{eq:9} would be proved for the case $d=5$.\vspace{3mm}

\noindent When $d\geq 6$, the calculations are similar. By Bernstein inequality,
\begin{equation}\label{eq:18}
\aligned
2^{\epsilon j}||P_j F(P_{\leq j}u)||_{L^{r^{'}}}&\lesssim 2^{\epsilon j}2^{-j(1-\alpha)s_c \frac{d+4-2s_c}{d-2s_c}}|| |\nabla|^{s_c}P_{\leq j}u||^{(1-\alpha)\frac{d+4-2s_c}{d-2s_c}}_{L^2_x} ||P_{\leq j}u||^{\alpha \frac{d+4-2s_c}{d-2s_c}}_{L^r_x} \\
&\lesssim_{\epsilon}  C_{\epsilon}\eta^{(1-\alpha) \frac{d+4-2s_c}{d-2s_c}}(\sup_{k\leq j_0}2^{\epsilon k}2^{-2(\frac{d}{m}-1)k}||P_k u||_{L^r_x})^{\alpha \frac{d+4-2s_c}{d-2s_c}}.
\endaligned
\end{equation}

\noindent where $\alpha=\frac{\frac{s_c}{2-s_c}d-4+2s_c}{d+4-2s_c}$ and $\alpha \frac{d+4-2s_c}{d-2s_c}<1$.\vspace{2mm}

\noindent By interpolation and Bernstein inequality, we obtain
\begin{equation}
\aligned
&|| |P_{>j}u| |P_{<j}u|^{\frac{4}{d-2s_c}} ||_{L_x^{r^{'}}} \\
&\lesssim  ||P_{\leq j}u||^{\frac{4}{d-2s_c}}_{L^r_x}(\sum_{j \leq  k \leq j_0}||P_k u||^{\beta}_{L_x^r} ||P_k u||^{1-\beta}_{L_x^2}+\sum_{k \geq j_0}||P_k u||^{\beta}_{L_x^r} ||P_k u||^{1-\beta}_{L_x^2})  \\
&\lesssim \eta^{\beta}\left((\sup_{k\leq j_0}2^{\epsilon k}2^{-2(\frac{d}{m}-1)k}||P_k u||_{L^r_x})^{\frac{4}{d-2s_c}+\beta} + (\sup_{k\leq j_0}2^{\epsilon k}2^{-2(\frac{d}{m}-1)k}||P_k u||_{L^r_x})^{\frac{4}{d-2s_c}} \right)          ,
\endaligned
\end{equation}
\noindent where $\beta=-1+\frac{2(d-8+4s_c)}{(2-s_c)(d-2s_c)}$ and $\beta+\frac{4}{d-2s_c}<1$.\vspace{2mm}

\noindent At last,
\begin{equation}\label{eq:19}
\aligned
&(\sum_{j \leq  k \leq j_0}||P_k u||^{1-\alpha}_{L^2_x}||P_k u||^{\alpha}_{L^r_x}+\sum_{k \geq j_0}||P_k u||^{1-\alpha}_{L^2_x}||P_k u||^{\alpha}_{L^r_x})^{\frac{d+4-2s_c}{d-2s_c}} \\
&\lesssim C_{\epsilon}\eta^{(1-\alpha)s_c \frac{d+4-2s_c}{d-2s_c}}(\sup_{k\leq j_0}2^{\epsilon k}2^{-2(\frac{d}{m}-1)k}||P_k u||_{L^r_x})^{\alpha \frac{d+4-2s_c}{d-2s_c}}+C_{\eta}.
\endaligned
\end{equation}
\noindent Putting \eqref{eq:18}-\eqref{eq:19} together, we conclude that
\begin{equation}
\sup_{j\leq j_0}2^{\epsilon j} 2^{-2(\frac{d}{m}-1)} ||P_j u||_{L_t^{\infty}L^r_x} \lesssim_{\epsilon} 1,
\end{equation}
\noindent Which proves \eqref{eq:9}  for the case $d\geq 6$. This completes the proof of Lemma \ref{base}.
\end{proof}

Using double-Duhamel formula, we may upgrade \eqref{eq:4} by showing that $u$ lies in additional Sobolev space apart from critical one. From which  one can iterate the above procedure and ultimately obtain negative regularity result.
\begin{lemma}[Inductive argument]\label{ind}
\noindent Let $d\geq 5$ and $u$ be as in Lemma \ref{base}. Assume that $|\nabla|^{s}F(u)\in L_t^{\infty}L_x^{q}$ for $q=\frac{2(d+2-s_c)(d-2s_c)}{(d+2-s_c)(d-2s_c)+4(d+2-3s_c)}$ and some $0 \leq s\leq 1$. Then there exists $s_0=\frac{d}{q}-\frac{d+4}{2}>0$ such that $u\in L^{\infty}_t\dot{H}^{(s-s_0)+}_x$.
\end{lemma}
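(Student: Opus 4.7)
The plan is to run a frequency-localized double Duhamel argument at the $L^2$ level. Fix a dyadic $N$ and $t_1\in\mathbb{R}$. Since $P_N$ commutes with $e^{it\Delta}$, applying $P_N$ to both one-sided Duhamel identities of Proposition \ref{le1} and pairing them (the $L^2$ analog of Lemma \ref{dou}) gives
\[
\|P_N u(t_1)\|_{L^2_x}^2 = -\lim_{t_0\to\inf I}\lim_{t_2\to\sup I}\Big\langle \int_{t_0}^{t_1} e^{i(t_1-\tau)\Delta}P_N F(u(\tau))\,d\tau,\ \int_{t_1}^{t_2} e^{i(t_1-t)\Delta}P_N F(u(t))\,dt\Big\rangle_{L^2_x}.
\]
Taking absolute values inside and using $L^q$-$L^{q'}$ duality in $x$, the task reduces to controlling
\[
I_N := \iint \|e^{i(t-\tau)\Delta}P_N F(u(\tau))\|_{L^{q'}_x}\,\|P_N F(u(t))\|_{L^q_x}\,d\tau\,dt.
\]

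The key kernel estimate comes from combining the dispersive bound (Lemma \ref{22disp}) with Bernstein (Lemma \ref{2Bineq}). The choice of $q$ is engineered so that $\tfrac{1}{q}-\tfrac{1}{2}=\tfrac{2+s_0}{d}$, which gives $\tfrac{d}{2}\bigl(1-\tfrac{2}{q'}\bigr)=2+s_0$ and $d\bigl(\tfrac{1}{q}-\tfrac{1}{q'}\bigr)=4+2s_0$, so
\[
\|e^{i\sigma\Delta}P_N g\|_{L^{q'}_x}\lesssim \min\bigl(|\sigma|^{-(2+s_0)},\ N^{4+2s_0}\bigr)\|P_N g\|_{L^q_x}.
\]
Setting $B := \|P_N F(u)\|_{L^\infty_t L^q_x}$ and carrying out the time integrations via $r=t-t_1$, $\sigma=t-\tau$, the double integral reduces (after sending $t_0\to\inf I$ and $t_2\to\sup I$) to
\[
\int_0^\infty\!\int_r^\infty \min\bigl(\sigma^{-(2+s_0)},\ N^{4+2s_0}\bigr)\,d\sigma\,dr \;\lesssim\; N^{2s_0},
\]
where the positivity of $s_0$ makes both improper integrals converge at infinity and the Bernstein truncation removes the singularity at $\sigma=0$. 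Hence $\|P_N u\|_{L^\infty_t L^2_x}\lesssim N^{s_0}B$. Since $|\nabla|^s$ commutes with $P_N$, Bernstein further yields $B\lesssim N^{-s}\||\nabla|^s F(u)\|_{L^\infty_t L^q_x}$, giving the frequency-envelope bound
\[
\|P_N u\|_{L^\infty_t L^2_x}\lesssim N^{s_0-s}\,\||\nabla|^s F(u)\|_{L^\infty_t L^q_x}.
\]

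To conclude $u\in L^\infty_t \dot{H}^{(s-s_0)+}_x$, I would square-sum over dyadic $N$ with weight $N^{2\sigma}$ for $\sigma=s-s_0+\varepsilon$ and $\varepsilon>0$ sufficiently small: the low-frequency piece $\sum_{N\leq 1} N^{2\varepsilon}$ converges geometrically from the bound above, while the high-frequency piece is controlled via $\|P_N u\|_{L^2}\lesssim N^{-s_c}\|u\|_{\dot{H}^{s_c}}$ together with \eqref{eq:8}, summable provided $\sigma<s_c$; this last condition holds automatically once $\varepsilon$ is small, since $s\leq 1$, $s_0>0$, and $s_c>\tfrac12$. The main technical obstacle will be justifying the double Duhamel identity at the $L^2$ level for $P_N u$ and then freely interchanging the limits $t_0\to\inf I$, $t_2\to\sup I$ with the time and space integrations; this absolute convergence is exactly what the positivity of $s_0$ secures, via the $|t-\tau|^{-(2+s_0)}$ decay that renders the bilinear integral finite.
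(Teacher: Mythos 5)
Your argument is essentially the paper's proof: a frequency-localized double Duhamel pairing, the same dispersive-plus-Bernstein kernel bound $\min(|t-\tau|^{-1},N^{2})^{\,2+s_0}$ (the paper phrases it as $\frac{d}{q}-\frac{d}{2}=2+s_0$), and the same time integration giving $N^{2s_0}$, followed by moving $|\nabla|^{s}$ onto $F(u)$ by Bernstein, so the core of the proposal is correct and matches the paper. One caveat: your claim that $s-s_0+\varepsilon<s_c$ holds ``automatically'' from $s\leq 1$, $s_0>0$, $s_c>\frac12$ is false in general, since $s_0=\frac{4s_c(2-s_c)}{(d+2-s_c)(d-2s_c)}$ can be small (e.g.\ $d=5$, $s_c=\frac34$ gives $s_0\approx 0.17$, so $s=1$ yields $s-s_0>s_c$); the condition does hold in the actual iteration, where $s\leq s_c$, and the paper itself leaves this high-frequency summation implicit.
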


\begin{proof}
It suffices to show that for $s_0>0$ and all $j\leq 0$,
\begin{equation}
|| |\nabla|^s P_j u ||_{L^{\infty}_{t} L^2_x} \lesssim 2^{j s_0}.
\end{equation}
\noindent By time translation symmetry, it suffices show that
\begin{equation}
|| |\nabla|^s P_j u(0) ||_{L^2_x} \lesssim 2^{j s_0}.
\end{equation}
\noindent The proof uses the double Duhamel argument, by Lemma \ref{dou}
\begin{equation}
|||\nabla|^s P_j u(0)||^2_{L^2_x} \leq \int_0^{\infty} \int_{-\infty}^{0}|\langle P_j |\nabla|^s F(u(t)),e^{i(t-\tau)\Delta}P_j |\nabla|^s F(u(\tau)) \rangle|  dt d\tau
\end{equation}
\noindent Using dispersive estimate, we obtain
\begin{equation}
|\langle P_j |\nabla|^s F(u(t)),e^{i(t-\tau)\Delta}P_j |\nabla|^s F(u(\tau)) \rangle_{L^2}| \lesssim |t-\tau|^{d(\frac{1}{2}-\frac{1}{q})}|||\nabla|^s F(u)||^2_{L^{\infty}_t L^q_x} .
\end{equation}
\noindent Also by the Sobolev embedding theorem,
\begin{equation}
|\langle P_j |\nabla|^s F(u(t)),e^{i(t-\tau)\Delta}P_j |\nabla|^s F(u(\tau)) \rangle_{L^2}| \lesssim 2^{2j(\frac{d}{q}-\frac{d}{2})}|||\nabla|^s F(u)||^2_{L^{\infty}_t L^q_x} .
\end{equation}
\noindent Thus, we have
\begin{align*}
|||\nabla|^s P_j u(0)||^2_{L^2_x} &\lesssim |||\nabla|^s F(u)||^2_{L^{\infty}_t L^q_x} \int_0^{\infty} \int_{-\infty}^{0} \textmd{min}(|t-\tau|^{-1},2^{2j})^{\frac{d}{q}-\frac{d}{2}} \\ &\lesssim 2^{2j s_0}|||\nabla|^s F(u)||^2_{L^{\infty}_t L^q_x}.
\end{align*}

Now the proof of \ref{ind} is complete.
\end{proof}

We can use Lemma \ref{base} as a base case and apply  Lemma \ref{ind} inductively to obtain the negative regularity. To this end, first we need to verify that
\beqq
|\nabla|^{s}F(u)\in L_t^{\infty}L_x^{q} \quad \text{for} \quad q=\frac{2(d+2-s_c)(d-2s_c)}{(d+2-s_c)(d-2s_c)+4(d+2-3s_c)}
\eeqq
Indeed, by fractional product rule \eqref{fpr},
\beq
\|\nabla|^{s}F(u)\|_{L_x^q}\lesssim \||\nabla|^s u\|_{L_x^2}\|u\|_{L_x^{\frac{2(d+2-s_c)}{d+2-3s_c}}}^p.
\eeq
Note that
\beqq
\frac{2(d+4-2s_c)}{d+4-4s_c}<\frac{2(d+2-s_c)}{d+2-3s_c}\leq \frac{2d}{d-2s_c}，
\eeqq
then from \eqref{eq:4} we get
\beq
\|\nabla|^{s}F(u)\|_{L_x^q}<\infty.
\eeq
Ultimately one may apply Lemma \ref{ind} several times to obtain:
\begin{theorem}[Negative regularity]\label{theo2}
\noindent Suppose $d\geq 5$, and $u$ is an almost periodic solution satisfying (1.7) then $u \in L^{\infty}_t\dot{H}^{-\epsilon(d)}_x(\mathbb{R}\times \mathbb{R}^d)$ for some $\epsilon=\epsilon(d)>0$. In particular, this implies $u \in L^{\infty}_tL^{2}_x(\mathbb{R}\times \mathbb{R}^d)$.
\end{theorem}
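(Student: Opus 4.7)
The plan is to iterate Lemma~\ref{ind}, starting from the critical regularity $s = s_c$, to progressively lower the Sobolev index by a fixed positive amount $s_0$ per application, until after finitely many iterations the index becomes strictly negative.

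For the base step I take $s = s_c \in (0,1)$. The hypothesis of Lemma~\ref{ind} at this level is precisely what the excerpt verifies just before the theorem: the fractional chain rule (Lemma~\ref{chain}) reduces $\||\nabla|^{s_c} F(u)\|_{L^q_x}$ to $\||\nabla|^{s_c} u\|_{L^2_x}\,\|u\|_{L^r_x}^p$ with $r = \tfrac{2(d+2-s_c)}{d+2-3s_c}$; the first factor is finite by the critical-norm assumption \eqref{eq:8}, and since $r$ lies inside the admissible range of Lemma~\ref{base}, the second factor is finite as well. A first application of Lemma~\ref{ind} thus produces $u \in L^\infty_t \dot{H}^{(s_c - s_0)+}_x$.

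For the inductive step, suppose I have reached $u \in L^\infty_t \dot{H}^{s_k}_x$ with $s_k$ slightly above $s_c - k s_0$. Applying the fractional chain rule again at level $s_k$ and invoking Lemma~\ref{base} (whose conclusion does not depend on $s$) to handle $\|u\|_{L^r_x}^p$, I verify the hypothesis of Lemma~\ref{ind} at $s = s_k$, provided $0 \le s_k \le 1$. A fresh application then outputs $s_{k+1}$ slightly above $s_k - s_0$. If I allot in advance an ``$+$''-loss of at most $s_0/(2N)$ per step, where $N := \lceil s_c/s_0\rceil + 1$, the cumulative loss after $N$ iterations is at most $s_0/2$, and the index has descended to at least $s_c - N s_0 + s_0/2 < 0$. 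This gives $u \in L^\infty_t \dot{H}^{-\epsilon(d)}_x$ for some $\epsilon(d) > 0$. Interpolating between $\dot{H}^{-\epsilon(d)}$ and $\dot{H}^{s_c}$ (using $s_c > 0 > -\epsilon(d)$) then yields $u \in L^\infty_t L^2_x$, completing the theorem.

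The main obstacle, as I see it, is the careful bookkeeping of the arbitrarily-small ``$+$''-losses at each step, making sure they remain a fraction of the cumulative gain $N s_0$ and that each intermediate $s_k$ stays inside $[0,1]$ so Lemma~\ref{ind} continues to apply. The upper bound $s_k \le 1$ is automatic because we start below $1$ and only descend; the lower bound $s_k \ge 0$ is preserved at every intermediate step, and it is precisely the final application — legitimately carried out at some $s^* \in [0,s_0]$ — that drives the regularity below zero. Since $s_0$ and $s_c$ are both fixed positive constants, the iteration terminates after a finite number of steps depending only on $d$.
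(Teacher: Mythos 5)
Your proposal is correct and follows essentially the same route as the paper: verify the hypothesis of Lemma~\ref{ind} via the fractional chain rule (Lemma~\ref{chain}), with $\||\nabla|^{s}u\|_{L^2_x}$ controlled by the current regularity and $\|u\|_{L^r_x}^p$ by Lemma~\ref{base}, then iterate Lemma~\ref{ind} finitely many times until the index drops below zero, and interpolate with the $\dot{H}^{s_c}$ bound to land in $L^2_x$. Your explicit bookkeeping of the ``$+$''-losses and of the constraint $0\le s\le 1$ simply fills in details the paper leaves to the phrase ``apply Lemma~\ref{ind} several times.''
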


\section{low-to-high frequency cascade scenario }
 In this section, we deal with the Low-to-high frequency cascade scenario. Using the negative regularity property together with conservation of mass, we can exclude the Low-to-high frequency cascade scenario as follows:
\begin{theorem}[Low-to-high frequency cascade scenario]\label{thmforlth}
If $u$ is an almost periodic solution satisfying \eqref{eq:15}, then $u(t,x)\equiv 0$.
\end{theorem}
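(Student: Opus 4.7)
The plan is to combine the negative regularity result of Theorem \ref{theo2}, which gives $u \in L^\infty_t L^2_x$, with conservation of mass. Since $\mathcal M(u(t)) = \|u(t)\|_{L^2_x}^2$ is now both finite and conserved by \eqref{01}, it suffices to exhibit a sequence of times $\{t_n\}$ along which $\|u(t_n)\|_{L^2_x} \to 0$. By time-reversal symmetry I may assume $\limsup_{t\to+\infty} N(t) = +\infty$, and pick $t_n \to +\infty$ with $N(t_n) \to \infty$.

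The main step is a frequency decomposition of $u(t_n)$ at the scale $M_n := N(t_n)/C(\eta)$ for a small parameter $\eta > 0$. The high-frequency piece will be controlled by Bernstein together with the critical-norm bound \eqref{eq:8}:
\begin{equation*}
\|P_{>M_n}u(t_n)\|_{L^2_x}^2 \lesssim M_n^{-2s_c}\||\nabla|^{s_c}u(t_n)\|_{L^2_x}^2 \lesssim (N(t_n)/C(\eta))^{-2s_c} \xrightarrow[n\to\infty]{} 0.
\end{equation*}
The low-frequency piece will be handled by interpolating between $\dot H^{s_c}_x$ and $\dot H^{-\epsilon}_x$, with $\epsilon = \epsilon(d) > 0$ supplied by Theorem \ref{theo2}: setting $\theta = \epsilon/(s_c+\epsilon) \in (0,1)$,
\begin{equation*}
\|P_{\leq M_n} u(t_n)\|_{L^2_x} \lesssim \|P_{\leq M_n}u(t_n)\|_{\dot H^{s_c}_x}^{\theta}\,\|u(t_n)\|_{\dot H^{-\epsilon}_x}^{1-\theta} \lesssim \eta^{\theta/2},
\end{equation*}
where the $\dot H^{-\epsilon}_x$ factor is uniformly controlled by Theorem \ref{theo2} and the $\dot H^{s_c}_x$ factor is bounded by $\eta^{1/2}$ using the low-frequency tail of almost periodicity. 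Combining the two pieces gives $\limsup_n \|u(t_n)\|_{L^2_x}^2 \lesssim \eta^{\theta}$; letting $\eta \to 0$ forces $\|u(t_n)\|_{L^2_x} \to 0$, and mass conservation yields $u \equiv 0$, contradicting the nontriviality of the critical element.

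The subtle point to flag is the low-frequency tail bound $\|P_{\leq N(t)/C(\eta)} u(t)\|_{\dot H^{s_c}_x}^2 \lesssim \eta$. The definition of almost periodic in \eqref{eq:7} writes down explicitly only the spatial tail and the \emph{high}-frequency Fourier tail, but the equivalent compactness-modulo-symmetries formulation that underlies almost periodicity also forces the symmetric low-frequency tail in $\dot H^{s_c}_x$ to be uniformly small. This is the one input requiring care; once it is available, the rest is routine Bernstein, interpolation, and conservation-law bookkeeping built on the two global inputs \eqref{eq:8} and Theorem \ref{theo2}.
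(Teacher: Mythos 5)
Your proposal is correct and follows essentially the same route as the paper's proof: a frequency decomposition at a scale tied to $N(t)$ along a sequence with $N(t_n)\to\infty$, Bernstein plus the critical bound \eqref{eq:8} for the high frequencies, interpolation of the low-frequency $\dot H^{s_c}$ smallness from almost periodicity against the $\dot H^{-\epsilon(d)}$ bound of Theorem \ref{theo2} for the low frequencies, and mass conservation to conclude $u\equiv 0$. The low-frequency tail issue you flag is real but harmless — it is exactly the standard compactness-modulo-symmetries consequence the paper implicitly invokes in \eqref{eq:16} — and your choice of threshold $N(t_n)/C(\eta)$ is in fact the more carefully stated version of the paper's splitting.
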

 \begin{proof}
From \eqref{eq:7},  for any $\eta>0$, there exists $C(\eta)$  such that
\begin{equation}\label{eq:16}
||P_{\leq C(\eta)N(t)}u(x,t)||_{\dot{H}^{s_c}}<\eta.
\end{equation}
Meanwhile, by Theorem \eqref{theo2}
\begin{equation}\label{eq:17}
\sup_{t\in \mathbb{R}} ||u(x,t)||_{\dot{H}_x^{-\epsilon(d)}(\mathbb{R}^d)} <+\infty,
\end{equation}
We interpolate \eqref{eq:16} and \eqref{eq:17} to obtain
\begin{equation}
||P_{\leq C(\eta)N(t)}u(t)||_{L^2}<\eta^s,
\end{equation}
where $0<s<1$. Furthermore, by Bernstein inequality and \eqref{eq:8},
\begin{equation}
||P_{\geq C(\eta)N(t)}u(t)||_{L^2(\mathbb{R}^d)}\lesssim \frac{1}{(C(\eta)N(t))^{s_c}}.
\end{equation}
\noindent Thus,
\begin{equation}
||u(t)||_{L^2(\mathbb{R}^d)}\leq ||P_{\leq C(\eta)N(t)}u(t)||_{L^2}+||P_{\geq C(\eta)N(t)}u(t)||_{L^2}\lesssim \eta^s+\frac{1}{(C(\eta)N(t))^{s_c}}.
\end{equation}
Since $\eta$ can be chosen arbitrarily small and let $t \rightarrow +\infty$, by the conservation of mass law,  we obtain $||u(t,x)||_{L^2} \equiv 0$, which implies $u\equiv 0$.
\end{proof}
\section{Soliton-like scenario}
In this section, we take care of the last scenario, i.e. Soliton-like scenario. Combining the negative regularity property obtained in section $4$ with interaction Morawetz estimate, the Soliton-like solution in the context of \eqref{so} can be excluded. We recall the following result in \cite{TVZ2}:

\begin{theorem}[Interaction Morawetz estimate \cite{TVZ2}]
\noindent If $u$ solves \eqref{01}
\noindent on $I\times \mathbb{R}^d$ for some $p>0$ and $d\geq 5$, then
\begin{equation}\label{inter}
|||\nabla|^{\frac{3-d}{4}}u||_{L^4_{t,x}(I\times \R^d )} \lesssim ||u||^{\frac{1}{2}}_{L_t^{\infty}L^2_x(I\times \R^d )}||u||^{\frac{1}{2}}_{L_t^{\infty}\dot{H}_x^{\frac{1}{2}}(I\times \R^d)}.
\end{equation}
\end{theorem}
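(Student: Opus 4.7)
The plan is to run a standard interaction Morawetz argument, in which the Morawetz weight $a(x-y) = |x-y|$ is applied to the tensor-product wave function $v(t,x,y) := u(t,x)\, u(t,y)$ on $\R^{2d}$, so that the positivity of $-\Delta_x^2 |x-y|$ on $\R^d$ for $d \geq 4$ can be harvested. I would introduce the interaction Morawetz action
\begin{equation*}
M(t) := 2 \Im \iint_{\R^d \times \R^d} |u(t,y)|^2 \, \frac{x-y}{|x-y|} \cdot \overline{u(t,x)} \, \nabla u(t,x) \, dx \, dy,
\end{equation*}
and establish the a priori bound $\sup_{t\in I}|M(t)| \lesssim \|u\|_{L_t^\infty L_x^2}^2 \|u\|_{L_t^\infty \dot{H}_x^{1/2}}^2$ by symmetrizing in $x \leftrightarrow y$ and performing a Plancherel computation in the $2d$-dimensional tensor variables; the kernel $(x-y)/|x-y|$ is a bounded zeroth-order convolution operator, which allows half a derivative to be ``split'' between the two factors of $v$.

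Next I would compute $\frac{d}{dt} M(t)$ along the NLS flow. The kinetic virial contribution produces the main positive term
\begin{equation*}
(d-1)(d-3) \iint \frac{|u(t,x)|^2\, |u(t,y)|^2}{|x-y|^3}\, dx \, dy,
\end{equation*}
via the identity $\Delta_x^2 |x-y| = -(d-1)(d-3)|x-y|^{-3}$ valid for $d \geq 4$; the defocusing nonlinearity $|u|^p u$ contributes a further nonnegative bulk term which can be discarded. Integrating in time over $I$ and invoking the a priori bound yields
\begin{equation*}
\int_I \iint_{\R^d \times \R^d} \frac{|u(t,x)|^2\, |u(t,y)|^2}{|x-y|^3}\, dx \, dy \, dt \lesssim \|u\|_{L_t^\infty L_x^2}^2 \|u\|_{L_t^\infty \dot{H}_x^{1/2}}^2.
\end{equation*}

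To close, I would identify the left-hand side with $\||\nabla|^{(3-d)/4} u\|_{L^4_{t,x}}^4$ up to constants. By Parseval, the spatial double integral equals a multiple of $\||\nabla|^{(3-d)/2}(|u(t)|^2)\|_{L_x^2}^2$, since the distributional Fourier symbol of $|x|^{-3}$ on $\R^d$ is proportional to $|\xi|^{3-d}$ when $d > 3$; a Littlewood-Paley square-function argument pointwise in $t$ then upgrades this to a lower bound by $\||\nabla|^{(3-d)/4} u(t)\|_{L_x^4}^4$, giving the desired estimate after integration in time. The main obstacle I anticipate is the virial computation: one must expand $\partial_t M$, justify the integrations by parts against the nonlocal Morawetz kernel $\nabla_x |x-y|$ (which in $d \geq 4$ replaces the localized delta distribution responsible for the pointwise $|u|^4$ integrand in $d = 3$), and verify that all remaining terms cooperate with the desired sign. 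A secondary subtlety is the final Littlewood-Paley step, which must accommodate the negative regularity index $(3-d)/4 < 0$ characteristic of dimensions $d \geq 5$ and the distribution of $|\nabla|^{(3-d)/2}$ across $|u|^2 = u \cdot \bar u$.
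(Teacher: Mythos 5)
The paper does not prove this theorem at all—it is quoted directly from Tao--Visan--Zhang \cite{TVZ2}—and your sketch reproduces exactly the standard interaction Morawetz argument of that reference: the tensor weight $|x-y|$, the a priori bound $\sup_{t}|M(t)|\lesssim \|u\|_{L_t^\infty L_x^2}^2\|u\|_{L_t^\infty \dot{H}_x^{1/2}}^2$, the positivity $-\Delta^2|x|=(d-1)(d-3)|x|^{-3}$ for $d\geq 4$ together with the discarded nonnegative defocusing and convexity terms, and the passage from $\||\nabla|^{\frac{3-d}{2}}(|u|^2)\|_{L^2_x}$ to $\||\nabla|^{\frac{3-d}{4}}u\|_{L^4_x}^2$ via a Littlewood--Paley square-function lemma, so your outline is correct and coincides with the cited proof. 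The one caution is that the bound on $M(t)$ is not a bare Plancherel splitting (the vector field $\frac{x-y}{|x-y|}$ is not a Fourier multiplier in $x$); it is a genuine, though standard, half-derivative lemma, just as the final negative-regularity square-function step is a separate lemma in \cite{TVZ2}—both of which you correctly identify as the technical content to be supplied.
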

\emph{Remark.} Generally, Theorem \ref{inter} holds for all $d\geq 1$. See \cite{Iteam1} for $d=3$, \cite{TVZ2} for $d \geq 4$ and \cite{CGT,PV} for $d=1,2$.

\begin{theorem}[Soliton-like scenario]\label{theo1}
 There are no almost periodic solutions to \eqref{01} in the setting of \eqref{so}.
\end{theorem}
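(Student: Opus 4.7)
The plan is to argue by contradiction. Assume there exists a nontrivial almost periodic solution $u:\mathbb{R}\times\mathbb{R}^d\to\mathbb{C}$ to \eqref{01} falling into the soliton-like regime \eqref{so} with $I=\mathbb{R}$ and $N(t)\sim 1$. The entire argument consists of comparing a finite global upper bound coming from the interaction Morawetz estimate against a uniform pointwise-in-$t$ lower bound extracted from almost periodicity and mass conservation, deriving the contradiction $+\infty<\infty$.

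\textbf{Upper bound.} First I will invoke the negative-regularity Theorem~\ref{theo2}, which gives $u\in L^\infty_tL^2_x$; combined with mass conservation this provides a constant, strictly positive mass $M_0:=\|u(t)\|_{L^2_x}^2>0$. Interpolating this $L^2$ bound against the critical bound $u\in L^\infty_t\dot{H}^{s_c}_x$ (using $s_c>1/2$) yields $u\in L^\infty_t\dot{H}^{1/2}_x$. The interaction Morawetz inequality \eqref{inter} applied on $I=\mathbb{R}$ then delivers the crucial \emph{finite} estimate
\begin{equation*}
\||\nabla|^{(3-d)/4}u\|_{L^4_{t,x}(\mathbb{R}\times\mathbb{R}^d)}\lesssim\|u\|_{L^\infty_tL^2_x}^{1/2}\|u\|_{L^\infty_t\dot{H}^{1/2}_x}^{1/2}<\infty.
\end{equation*}

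\textbf{Lower bound on the integrand.} The heart of the proof will be the pointwise estimate $\||\nabla|^{(3-d)/4}u(t)\|_{L^4_x}\gtrsim c>0$ uniformly in $t\in\mathbb{R}$, which when integrated over the line contradicts the display above. To establish it I will use that $N(t)\sim 1$ uniformly: the almost periodic definition \eqref{eq:7} then supplies a single constant $C$ (independent of $t$) beyond which both the physical and the frequency tails of $|\nabla|^{s_c}u(t)$ are arbitrarily small, while Bernstein applied to the negative-regularity bound makes the very low frequencies of $u(t)$ correspondingly small in $L^2$ and $\dot{H}^{s_c}$. Thus, up to controllable $\eta$-errors, $u(t)$ behaves as a function simultaneously frequency-supported on $|\xi|\sim 1$ and physically concentrated in $|x-x(t)|\lesssim 1$, with $L^2$-mass $\sim M_0^{1/2}$. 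On such a doubly-localized piece the multiplier $|\nabla|^{(3-d)/4}$ has a symbol bounded above \emph{and below} on $|\xi|\sim 1$, so Bernstein on each dyadic block gives $\||\nabla|^{(3-d)/4}u(t)\|_{L^4_x}\sim\|u(t)\|_{L^4_x}$ up to $\eta$-errors; H\"older on the ball of radius $\sim 1$ then produces $\|u(t)\|_{L^4_x}\gtrsim M_0^{1/2}$, completing the uniform lower bound after choosing $\eta$ small.

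\textbf{Main obstacle.} The most delicate step I anticipate is the transfer from the $\dot{H}^{s_c}$-type localization \eqref{eq:7} to a genuine $L^2$ and $L^4$ concentration statement for $u$ itself. It is precisely here that Theorem~\ref{theo2} becomes indispensable: without some information below $\dot{H}^{s_c}$ one cannot control the low-frequency part of $u$, and hence cannot compare the negative-order quantity $\||\nabla|^{(3-d)/4}u(t)\|_{L^4_x}$ against a positive-order norm on which concentration of mass is visible. Once this double localization is in place, the remainder is standard Bernstein and H\"older bookkeeping.
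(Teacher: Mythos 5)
Your upper-bound half coincides with the paper's: Theorem \ref{theo2} gives $u\in L^\infty_tL^2_x$, interpolation with \eqref{eq:8} (here $s_c>\tfrac12$ is used) gives $u\in L^\infty_t\dot H^{1/2}_x$, and the interaction Morawetz estimate \eqref{inter} yields $\||\nabla|^{(3-d)/4}u\|_{L^4_{t,x}(\R\times\R^d)}<\infty$. The gap is in your lower bound. The asserted comparability $\||\nabla|^{(3-d)/4}u(t)\|_{L^4_x}\sim\|u(t)\|_{L^4_x}$ ``up to $\eta$-errors'' cannot be justified from the available bounds: for $d\ge 5$ the norm $\|u(t)\|_{L^4_x}$ lives at regularity $d/4\ge 5/4$, far above $s_c<1$, so the right-hand side need not even be finite. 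Concretely, to run your decomposition you must control the high-frequency error $\||\nabla|^{(3-d)/4}P_{>C(\eta)}u(t)\|_{L^4_x}$; Bernstein prices each dyadic block at $N^{3/4-s_c}\|P_Nu(t)\|_{\dot H^{s_c}}$, so the tail sum is controlled by the $\dot H^{s_c}$-tail smallness only when $s_c>3/4$. In the range $\tfrac12<s_c\le\tfrac34$, which the theorem covers, neither \eqref{eq:8} nor almost periodicity provides any information at regularity $3/4$, and the triangle-inequality comparison does not close. A second, smaller issue: the spatial concentration of the $L^2$-mass in $|x-x(t)|\lesssim 1$ does not follow directly from \eqref{eq:7}, which localizes $|\nabla|^{s_c}u$; one needs the compactness/interpolation step (precompactness of the orbit in $\dot H^{s_c}$ modulo translations plus the uniform $\dot H^{-\epsilon}$ bound gives precompactness, hence uniform spatial concentration, in $L^2$), which you acknowledge but do not carry out.

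Your strategy is salvageable with one modification: instead of decomposing $u$ and subtracting tails, apply a fixed band projection first and use its $L^4$-boundedness, i.e. $\||\nabla|^{(3-d)/4}u(t)\|_{L^4_x}\gtrsim\||\nabla|^{(3-d)/4}P_{c(\eta)\le\cdot\le C(\eta)}u(t)\|_{L^4_x}\sim_{\eta}\|P_{c(\eta)\le\cdot\le C(\eta)}u(t)\|_{L^4_x}$, and then run your H\"older/mass-concentration argument on the band-limited piece; this never requires a norm above $s_c$. The paper avoids the pointwise-in-time treatment of the negative-order quantity altogether: it interpolates the Morawetz bound \eqref{eq:14} with $u\in L^\infty_t\dot H^{s_c}_x$ to obtain $u\in L^{q}_tL^{r}_x$ with finite $q=\frac{4s_c+d-3}{s_c}$ and $r=\frac{8s_c+2d-6}{2s_c+d-3}$, and then uses $N(t)\sim1$, \eqref{eq:7} and H\"older to get the uniform lower bound $\|u(t)\|_{L^r_x}\gtrsim1$, which is incompatible with a finite time exponent on $I=\R$. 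That route only ever needs lower bounds on positive-regularity quantities, which is exactly why it sidesteps the $s_c\le 3/4$ obstruction your version runs into.
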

\begin{proof}
We argue by contradiction argument. Assuming  $u$ is an almost periodic solution to \eqref{01} in the setting of \eqref{so}, using Theorem \ref{theo2} together with \eqref{eq:8}, we obtain
\begin{equation} \label{eq:14}
|||\nabla|^{\frac{3-d}{4}}u||_{L^4_{t,x}(\mathbb{R}\times \mathbb{R}^d)} < +\infty.
\end{equation}
\noindent Interpolating  \eqref{eq:14} with \eqref{eq:8},
\noindent We obtain
\begin{equation}\label{eq:6}
||u||_{L^{\frac{4s_c+d-3}{s_c}}_tL^{\frac{8s_c+2d-6}{2s_c+d-3}}_x(\R\times \R^d )} \lesssim 1.
\end{equation}
\noindent Moreover, using $N(t) \sim 1$, by \eqref{eq:7} and H\"older inequality,
\begin{equation}
||u(t)||_{L^{\frac{8s_c+2d-6}{2s_c+d-3}}_x(\mathbb{R}^d)} \gtrsim 1.
\end{equation}
\noindent this is a contradiction with \eqref{eq:6}.
\end{proof}

\noindent \textbf{Acknowledgments.} We highly appreciate Professor Benjamin Dodson and Professor Changxing Miao for their kind suggestions, help and discussions. Zehua Zhao is also grateful to Professor Jason Murphy and Xueying Yu for useful discussions.

 \bibliographystyle{amsplain}

\begin{thebibliography}{99}


 \bibitem{Bourgain} J. Bourgain,  \emph{Global wellposedness of defocusing critical nonlinear Schr\"{o}dinger equation in
the radial case, }J. Amer. Math. Soc. 12 (1999), 145-171. MR1626257
\bibitem{CW} Cazenave T and Weissler F B.,\emph{ The Cauchy problem for the nonlinear Schrodinger equations in $H^s$. } Non-Linear Analysis, 1990, 14 (10): 807-836.
\bibitem{CW2} M. Christ and Weissler F B.,\emph{ Dispersion of small amplitude solutions of the generalized Korteweg-de Vries equation [J]. } Journal of Functional Analysis, 100
no. 1 (1991) 87–109.
\bibitem{CGT}
J. Colliander, M. Grillakis, and N. Tzirakis,
\emph{ Improved interaction Morawetz inequalities for the cubic
nonlinear Schr{\"o}dinger equation on $\mathbb{R}^2$. } Int. Math. Res. Not. IMRN 2007 (2007), no. 23, Art. ID rnm 090, 30.
\bibitem{Iteam1}J. Colliander, M. Keel, G. Staffilani, H. Takaoka, and T. Tao,\emph{ Global existence and scattering for rough solutions of a nonlinear Schr\"odinger equation on $\R^3$. } Communications on Pure  Applied Mathematics, 2003, 57(8):987-1014.
\bibitem{Iteam2} J. Colliander, M. Keel, G. Staffilani, H. Takaoka, and T. Tao,\emph{ Global well-posedness and scattering for the energy-critical nonlinear Schr\"odinger equation in $\R^3$. } Ann. of Math.,167 (2008), pp. 767-865.

\bibitem{Dodson1} B. Dodson,  \emph{Global well-posedness and scattering for the defocusing, ~$L^2$-critical, nonlinear Schr\"{o}dinger equation when $d \geq 3$. }J. Amer. Math. Soc. 25 (2012), 429-463. MR2869023

\bibitem{Dodson2} B. Dodson, \emph{Global well-posedness and scattering for the defocusing, $L^2$-critical, nonlinear Schr\"{o}dinger equation when $d = 2$. } Duke Mathematical Journal,
165 no. 18 (2016) 3435 – 3516.

\bibitem{Dodson3} B. Dodson, \emph{Global well-posedness and scattering for the defocusing, $L^2$-critical, nonlinear Schr\"{o}dinger equation when $d = 1$. } Amer. J. Math., 138 (2016), no. 2, 531-569.

\bibitem{Dodson4} B. Dodson, \emph{Global well-posedness and scattering for the mass critical nonlinear Schr\"{o}dinger equation with mass below the mass of the ground state. } Preprint arXiv:1104.1114.


\bibitem{DMMZ} B. Dodson, C. Miao, J. Murphy, and J. Zheng, \emph{The defocusing quintic NLS in four space dimensions,  Annales de l'Institut Henri Poincare/Analyse non lineaire. } doi:10.1016/j.anihpc.2016.05.004.

\bibitem{Dodsonbook}
B. Dodson, \emph{Defocusing nonlinear Schr{\"o}dinger equations},
Manuscript.

\bibitem{GMY} C. Gao, C. Miao and J. Yang, \emph{ The interctitical defocusing nonlinear Schr\"{o}dinger equations with radial initial data in dimensions four and higher. }
Arxiv: 1707.04686.

\bibitem{Grillakis} M. Grillakis,\emph{ On nonlinear Schr\"{o}dinger equations. }Comm. PDE 25 (2000), 1827-1844. MR1778782

\bibitem{KenigMerle1} C. E. Kenig and F. Merle,\emph{ Scattering for $\dot{H }^{1/2} $ bounded solutions to the cubic, defocusing NLS in 3 dimensions. }Trans. Amer. Math. Soc. 362 (2010), 1937-1962. MR2574882

\bibitem{KenigMerle2} C. E. Kenig and F. Merle,\emph{ Global well-posedness, scattering and blow-up for the energy critical, focusing, non-linear Schr\"odinger equation in the radial case.} Invent. Math. 166
(2006), 645-675. MR2257393

\bibitem{kenigmerler2} C. E. Kenig and F. Merle,\emph{ Scattering for ˙$\dot{H }^{1/2 }$ bounded solutions to the cubic, defocusing NLS in $3$ dimensions. }Trans. Amer. Math. Soc. 362 (2010), 1937-1962. MR2574882

\bibitem{KV3} R. Killip and M. Visan,\emph{ The focusing energy-critical nonlinear Schr\"odinger equation in dimensions five and higher. } Amer. J. Math. 132 (2010), 361-424. MR2654778

\bibitem{kv2010} R. Killip and M. Visan,\emph{ Energy-supercritical NLS: critical $\dot{H}^s$-bounds imply scattering. }Comm. PDE 35 (2010), 945-987. MR2753625

\bibitem{kv2} R. Killip and M. Visan,\emph{  Nonlinear Schr\"odinger equations at critical regularity. } In ``Evolution equations'', 325-437, Clay Math. Proc. \textbf{17}, Amer. Math. Soc., Providence, RI, 2013.

\bibitem{KTM} R. Killip, T. Tao, and M. Visan,\emph{ The cubic nonlinear Schr\"{o}dinger equation in two dimensions with radial data. } J. Eur. Math. Soc. (JEMS) 11 (2009), 1203-1258. MR2557134.

\bibitem{KVX} R. Killip, M. Visan, X. Zhang,\emph{ The mass-critical nonlinear Schr\"{o}dinger equation with radial data in dimensions three and higher. } Analysis and PDE 1 (2008), 229-266. MR2472890.

\bibitem{LZ} C. Lu and J. Zheng,\emph{ The radial defocusing energy-supercritical NLS in dimension four. } Journal of Differential Equations, 262 (2017), 4390-4414.

\bibitem{MMZ} C. Miao, J. Murphy, and J. Zheng,\emph{ The defocusing energy-supercritical NLS in four space dimensions. }   J. Funct. Anal. 267 (2014), no. 6, 1662-1724.

\bibitem{M1} J. Murphy,\emph{ Inter-critical NLS: critical $\dot{H}^s$-bounds imply scattering. }  SIAM
J. Math. Anal., 46(2014), 939-997.

\bibitem{M2} J. Murphy, \emph{The defocusing $\dot{H}^{1/2}$-critical NLS in high dimensions. } Discrete Contin. Dyn.
Syst. Series A 34 (2014), 733-748. MR3094603
\bibitem{M3}J. Murphy,  \emph{The radial defocusing nonlinear Schr\"odinger Equation in three space dimensions. } Communications in Partial Differential Equations, 2015, 40(2):265-308.

\bibitem{PV}
F. Planchon and L. Vega,
\emph{Bilinear virial identities and applications. }
Ann. Sci. Ecole Norm. Sup. (4) 42 (2009), no. 2, 261–290.

\bibitem{RM} E. Ryckman and M. Visan,\emph{ Global well-posedness and scattering for the defocusing energycritical nonlinear Schr\"{o}dinger equation in $R^{1+4}$. }Amer. J. Math. 129 (2007), 1-60.MR2288737

\bibitem{TaoNew} T. Tao,\emph{ Global well-posedness and scattering for the higher-dimensional energy-critical non-linear Schr\"{o}dinger equation for radial data.
}New York J. of Math. 11 (2005), 57-80.MR2154347

\bibitem{TVZ} T. Tao, M. Visan, and X. Zhang,\emph{ Global well-posedness and scattering for the defocusing
mass-critical nonlinear Schr\"{o}dinger equation for radial data in high dimensions. } Duke
Math. J. 140 (2007), 165-202. MR2355070

\bibitem{TVZ2}
T. Tao, M. Visan and X. Zhang,
\emph{The nonlinear Schr{\"o}dinger equation with combined power-type nonlinearities. }
Comm. Partial Differential Equations 32 (2007), no. 7-9, 1281–1343.

\bibitem{TVZ3} T. Tao, M. Visan, and X. Zhang, \emph{Minimal-mass blowup solutions of the mass-critical NLS. }
Forum Math. 20 (2008), 881-919. MR2445122.

\bibitem{Taylor}
M. E. Taylor, \emph{Partial Differential Equations I - III, Second Edition.}
Applied Mathematical Sciences 115 Springer-Verlag, New York, 2011.

\bibitem{V1} M. Visan,\emph{ The defocusing energy-critical nonlinear Schr$\ddot{o}$dinger equation in higher dimensions.} Duke Math. J. 138 (2007), 281-374. MR2318286

\bibitem{V2} M. Visan,\emph{ The defocusing energy-critical nonlinear Schr\"{o}dinger equation in dimensions five and higher. } Ph.D. Thesis, UCLA, 2006. MR2709575

\bibitem{XF} J. Xie and D. Fang, \emph{Global well-posedness and scattering for
the defocusing $\dot{H}^{s}$-critical NLS. }
Chin. Ann. Math. 34B (6), 2013, 801–842.

\bibitem{Y} X. Yu, \emph{Global well-posedness and scattering for the defocusing critical nonlinear  Schr$\ddot{o}$dinger equation in $\mathbb{R}^2$.}
Arxiv: 1805.03230v1.

 \end{thebibliography}
 
\hfill \linebreak

\noindent \author{Chuanwei Gao}

\noindent \address{The Graduate School of China Academy of Engineering Physics, P. O. Box 2101,
Beijing, China, 100088,}

\noindent \email{canvee@163.com}\vspace{5mm}

\noindent \author{Zehua Zhao}

\noindent \address{Johns Hopkins University, Department of Mathematics, 3400 N. Charles Street, Baltimore, MD 21218, U.S.}

\noindent \email{zzhao25@jhu.edu}\\

\end{document}